\newtheorem{thm}{Theorem}[section]
\newtheorem{lem}[thm]{Lemma}
\newtheorem{prop}[thm]{Proposition}
\newtheorem{cor}[thm]{Corollary}
\newdefinition{defn}[thm]{Definition}
\newdefinition{exmp}[thm]{Example}
\newtheorem{proc}[thm]{Procedure}
\newdefinition{rem}[thm]{Remark}
\begin{document}

\begin{frontmatter}

\title{The quantized walled Brauer algebra\\and mixed tensor space}

\author[s]{R. Dipper }
\ead{rdipper@mathematik.uni-stuttgart.de}
\author[c]{S. Doty }
\ead{doty@math.luc.edu}
\author[s]{F. Stoll }
\ead{stoll@mathematik.uni-stuttgart.de}

\address[s]{ Institut f\"ur Algebra und Zahlentheorie,
  Universit\"at Stuttgart,
  Pfaffenwaldring 57, 70569 Stuttgart,
  Germany}
\address[c]{  Department of Mathematics and Statistics,
  Loyola University Chicago, 
  1032 W.~Sheridan Road, 
  Chicago, IL 60660 USA}

\begin{abstract}
  In this paper we investigate a multi-parameter deformation
  $\mathfrak{B}_{r,s}^n(a,\lambda,\delta)$ of the walled Brauer
  algebra which was previously introduced by Leduc (\cite{leduc}). We
  construct an integral basis of
  $\mathfrak{B}_{r,s}^n(a,\lambda,\delta)$ consisting of oriented
  tangles which is in bijection with walled Brauer diagrams. Moreover,
  we study a natural action of $\mathfrak{B}_{r,s}^n(q)=
  \mathfrak{B}_{r,s}^n(q^{-1}-q,q^n,[n]_q)$ on mixed tensor space and
  prove that the kernel is free over the ground ring $R$ of rank
  independent of $R$. As an application, we prove one side of
  Schur--Weyl duality for mixed tensor space: the image of
  $\mathfrak{B}_{r,s}^n(q)$ in the $R$-endomorphism ring of mixed
  tensor space is, for all choices of $R$ and the parameter $q$, the
  endomorphism algebra of the action of the (specialized via the
  Lusztig integral form) quantized enveloping algebra $\mathbf{U}$ of
  the general linear Lie algebra $\mathfrak{gl}_n$ on mixed tensor
  space. Thus, the $\mathbf{U}$-invariants in the ring of $R$-linear
  endomorphisms of mixed tensor space are generated by the action of
  $\mathfrak{B}_{r,s}^n(q)$.
\end{abstract}

\begin{keyword}
Schur--Weyl duality\sep walled Brauer algebra \sep mixed tensor space

\MSC 33D80 \sep 16D20 \sep 16S30 \sep 17B37 \sep 20C08 
\end{keyword}

\end{frontmatter}

\input epsf

\section*{Introduction}
Schur--Weyl duality is a special case of a bicentralizer property. For
algebras $A$ and $B$, we say that an $A$-$B$-bimodule $T$ satisfies
the \emph{bicentralizer property} if $\mathrm{End}_A(T)=\rho(B)$ and
$\mathrm{End}_B(T)=\sigma(A)$, where $\rho$ and $\sigma$ are the
corresponding representation maps. The classical Schur--Weyl duality
due to Schur \cite{schur} is the bicentralizer property for the
representations of the group algebras of the symmetric group
$\mathfrak{S}_m$ and the general linear group
$\mathrm{GL}_n(\mathbb{C})$ on \emph{ordinary tensor space}
$V^{\otimes m}$, where $V=\mathbb{C}^n$ is the natural
$\mathbb{C}\mathrm{GL}_n(\mathbb{C})$-module.  The image
$\rho(\mathbb{C}\mathrm{GL}_n(\mathbb{C}))$ in
$\mathrm{End}_{\mathbb{C}}(V^{\otimes m})$ is the ordinary Schur
algebra $S_{\mathbb{C}}(n,m)$ and the finite dimensional
representations of these finite dimensional algebras are precisely the
$m$-homogeneous polynomial representations of
$\mathrm{GL}_n(\mathbb{C})$. These statements continue to hold when
$\mathbb{C}$ is replaced by an arbitrary infinite field \cite{DP,
  jagreen}, and have been further generalized to the case where the
symmetric groups are replaced by corresponding quantum groups and
Hecke algebras associated with symmetric groups (see
\cite{dipperdonkin, duparshallscott}).

This paper, a revision of an earlier preprint that has been in
circulation since 2008, studies Schur--Weyl duality on mixed tensor
space in the following more general setting. Let $R$ be a commutative
ring with $1$, $q$ an invertible element of $R$, $n$ a positive
integer and $r,s$ nonnegative integers.  The (specialized) quantum
group $\mathbf U = \mathbf U_R$ of the general linear group is a
$q$-deformation of the hyperalgebra of the general linear group and
acts naturally on \emph{mixed tensor space} $V^{\otimes r}\otimes
{V^*}^{\otimes s}$, where $V=R^n$ is the natural $\mathbf U$-module
and $V^*$ its dual.  The image $S_{R,q}(n;r,s)$ in
$\mathrm{End}_{R}(V^{\otimes r}\otimes {V^*}^{\otimes s})$ is called
the rational $q$-Schur algebra (studied in the $q=1$ case in
\cite{dipperdoty}) and its finite dimensional representatations are
the same as the rational representations of $\mathbf U$ (or quantum
$\mathrm{GL}_n$) of bidegree $r,s$.  In this paper we show that
$\mathrm{End}_{\mathbf U}(V^{\otimes r}\otimes {V^*}^{\otimes s})$ is
the image of a $q$-deformation $\mathfrak{B}_{r,s}^n(q)$ over $R$ of
the walled Brauer algebra with parameter $n$ acting on mixed tensor
space.  In other words, we show that the algebra of $\mathbf
U$-invariants of $\mathrm{End}_{R}(V^{\otimes r}\otimes {V^*}^{\otimes
  s})$ is generated by the action of $\mathfrak{B}_{r,s}^n(q)$.  The
walled Brauer algebra over $\mathbb C$ was independently 
introduced in  \cite{turaev} and \cite{koike} and its description in
terms of Brauer diagrams was given in \cite{bchlls}. 
A $q$-deformation of the walled Brauer algebra
$\mathfrak{B}_{r,s}^n(q)$
 was investigated for the
case $R=\mathbb{C}(q)$ in \cite{kosudamurakami,leduc}.

Various earlier results are special cases of our results. When the
algebras involved are semisimple, it is possible to obtain the above
description of invariants by decomposing mixed tensor space into
irreducible modules. We obtain more general results by defining an
integral isomorphism between $\mathrm{End}_{\mathbf U}(V^{\otimes
  r}\otimes {V^*}^{\otimes s})$ and $\mathrm{End}_{\mathbf
  U}(V^{\otimes r+s})$.  As a consequence of our main result we obtain
that the kernel of the action of $\mathfrak{B}_{r,s}^n(q)$ on mixed
tensor space is free as $R$-module of rank independent of $R$ and $q$.
Furthermore, by specializing $q$ to 1, we obtain the result that the
algebra of $U_R(\mathfrak{gl}_n)$-invariants of
$\mathrm{End}_{R}(V^{\otimes r}\otimes {V^*}^{\otimes s})$ is
generated by the action of the walled Brauer algebra over $R$. This is
in itself a new result, except in the case $R = \mathbb{C}$, where it
was established in \cite{bchlls}. (For another approach to the $q=1$
result, see the recent preprint \cite{tange}.)

The walled Brauer algebra  has a graphical basis
consisting of certain Brauer diagrams called walled Brauer diagrams.
Leduc \cite{leduc} introduced a multi-parameter deformation
$\mathcal{A}_{r,s}$ of the walled Brauer algebra by (non diagrammatic)
generators and relations and defined an epimorphism from a certain
tangle algebra to $\mathcal{A}_{r,s}$.  This tangle algebra is defined
in a similar way  to Kauffman's tangle algebra \cite{kauffman}.
We alter this description to
make it precise: instead of nonoriented tangles, we take oriented
tangles generating a suitable deformation of the walled Brauer algebra
in terms of diagrams.  These oriented tangles allow us to prove our
results by a simple argument, once it is shown that they act on the
mixed tensor space.

The other half of Schur--Weyl duality for mixed tensor space, namely
that $\mathrm{End}_{\mathfrak{B}_{r,s}^n(q)}(V^{\otimes r}\otimes
{V^*}^{\otimes s})$ is equal to the image of the representation of
$\mathbf U$, i.e.\ the rational $q$-Schur algebra $S_q(n;r,s)$, is
shown by the authors in \cite{dipperdotystoll2} using completely
different methods.

Since the first version of our papers have been posted to the arXiv
the walled Brauer algebra and its action on mixed tensor space has
drawn considerably interest (see
e.~g.~\cite{coxvisscher,coxvissxherdotymartin,donkin,
  goodmangraber,tange,wangkoenig}). In
a 
series of papers, Brundan and Stroppel investigate Khovanov's diagram
algebra
(\cite{brundanstroppel1,brundanstroppel2,brundanstroppel3,
  brundanstroppel4})
and study subsequently an application of these on walled Brauer
algebras in the classical situation of the complex field. In their
fundamental paper \cite{brundanstroppel} they prove in particular
Schur-Weyl duality of a graded version of the walled Brauer algebra
over the complex field in a supergroup setting.
The authors would like to thank the referees for pointing this out and
further  helpful
comments. 

\section{Oriented tangles}\label{section:tangles}
Throughout the paper let $n$ be a positive integer, $r$ and $s$ be
nonnegetive integers and set $m=r+s$. Let $R$ be a commutative ring
with $1$ and $q$ be an invertible element of $R$.

In \cite{leduc} Leduc defined a multi-parameter algebra
$\mathcal{A}_{r,s}$ which is a deformation of the walled Brauer
algebra. In this section we introduce a tangle version
$\mathfrak{B}_{r,s}^n(a,\lambda,\delta)$ of this algebra. We are
primarily interested in the specialized algebra
$\mathfrak{B}_{r,s}^n(q):=\mathfrak{B}_{r,s}^n(q^{-1}-q,q^n,[n]_q)$,
which acts on mixed tensor space by an action which we will describe
later on.

A \emph{tangle} is a knot diagram in a rectangle (with two opposite
edges designated ``top'' and ``bottom'') contained in the plane
$\mathbb{R}^2$, consisting of $m$ vertices along the top edge, $m$
vertices along the bottom edge, and $m$ strands in $\mathbb{R}^3$
connecting the vertices, such that each vertex is an endpoint of
exactly one strand, along with a finite number of closed cycles. Two
tangles are regularly isotopic, if they are related by a sequence of
Reidemeister's moves $\mathrm{RM\,II}$ and $\mathrm{RM\,III}$,
together with isotopies fixing the vertices:
\[
\mathrm {RM\,II}:
\raisebox{-0.5cm}{\epsfbox{reidemeister.1}}
=\raisebox{-0.5cm}{\epsfbox{reidemeister.2}}\quad
\mathrm {RM\,III}:
  \raisebox{-0.5cm}{\epsfbox{reidemeister.3}}=
  \raisebox{-0.5cm}{\epsfbox{reidemeister.4}}\,,\;
  \raisebox{-0.5cm}{\epsfbox{reidemeister.5}}=
  \raisebox{-0.5cm}{\epsfbox{reidemeister.6}}
\]

We fix the following notations: the vertices in the top row are
denoted by $t_1,t_2,\ldots,t_m$, the vertices in the bottom row by
$b_1,b_2,\ldots,b_m$.  An \emph{oriented tangle} is a tangle such that
each of its strands has a direction.  Let $T$ be an oriented
tangle. To $T$ we associate sequences $I=(I_1,\ldots,I_m)$ and
$J=(J_1,\ldots,J_m)$ of symbols '$\downarrow$' and '$\uparrow$' by the
following rules:
\begin{enumerate}
\item
  If there is a strand starting in   $t_k$ then let $I_k=\downarrow$,
  if there is a strand ending in $t_k$, let $I_k=\uparrow$.  
\item 
  If there is a strand starting in   $b_k$ then let $J_k=\uparrow$,
  if there is a strand ending in $b_k$, let $J_k=\downarrow$. 
\end{enumerate}

Observe that for a given oriented tangle the number of entries equal
to $\downarrow$ in $I$ necessarily is the same as the number of
entries in $J$ equal to $\downarrow$ (similarly for $\uparrow$).  We
henceforth assume that the indices $I,J$ satisfy this condition and
call $T$ an oriented tangle of \emph{type} $(I,J)$ (see
Figure~\ref{figure:oriented_tangle}).  We indicate repeated
occurrences of a symbol by exponents.

\begin{figure}[h]
  \centerline{\epsfbox{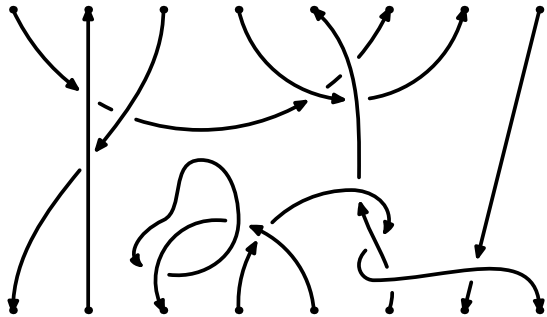}}
  \caption{an oriented tangle of type 
    $((\downarrow,\uparrow,\downarrow^2,\uparrow^3,\downarrow),
    (\downarrow,\uparrow,\downarrow,\uparrow^3,\downarrow^2))$}
  \label{figure:oriented_tangle}
\end{figure}

If $v$ is a vertex, let $s(v)$ be the strand starting or ending in
$v$ with respect to the orientation.
If $s$ is a strand, let $b(s)$ be the beginning (or starting)
vertex and
$e(s)$ the ending vertex of $s$.  Note that strands begin and end at
vertices and are not closed cycles.

\begin{defn}
  \begin{enumerate}
  \item
    Let $\Lambda=\mathbb{Z}[a,\lambda, \lambda^{-1},
    a^{-1}(\lambda^{-1}-\lambda)]$ and let
    $\delta=a^{-1}(\lambda^{-1}-\lambda)$. Let   $\mathcal{U}_{I,J}'$ be the
    $\Lambda$-module generated by  
    the  oriented tangles of type $(I,J)$ up to regular isotopy and
    the following relations 
    which can be applied to a local disk in an oriented tangle 
    \[
    \begin{array}{c@{\qquad}c}
      (O1)& \raisebox{-.4cm}{\epsfbox{relations.1}}- 
      \raisebox{-.4cm}{\epsfbox{relations.2}}=a
      \raisebox{-.4cm}{\epsfbox{relations.3}}\\
      (O2)& \raisebox{-.4cm}{\epsfbox{relations.4}}
      =\raisebox{-.4cm}{\epsfbox{relations.5}}=\delta
      \\
      (O3)&\raisebox{-.4cm}{\epsfbox{relations.6}}=
      \raisebox{-.4cm}{\epsfbox{relations.7}}=\lambda\;
      \raisebox{-.4cm}{\epsfbox{relations.11}}\\
      (O4)&\raisebox{-.4cm}{\epsfbox{relations.8}}=
      \raisebox{-.4cm}{\epsfbox{relations.9}}=\lambda^{-1}\;
      \raisebox{-.4cm}{\epsfbox{relations.11}}\\
    \end{array}
    \]
    If $\Lambda\to R$ is a ring homomorphism, let $\mathcal{U}_{I,J}'$
    be the $R$-module given by the same generators and relations
    with specialized coefficients. 
  \item
    In particular, if $R$ is a commutative ring with $1$ and
    $q\in R$ invertible,  let  $[n]_q=\frac{q^n-q^{-n}}{q-q^{-1}}=
    \sum_{i=0}^{n-1}q^{2i-n+1}$ for the positive integer $n$. Then let 
    $\mathcal{U}_{I,J}$ be  $\mathcal{U}_{I,J}'$ for the
    specialization $\Lambda\to R:a\mapsto q^{-1}-q$,
    $\lambda\mapsto q^n$, $\delta\mapsto[n]_q$. 
  \end{enumerate}
\end{defn}
Note that although $\delta$ is not an independent parameter, the
images of $a$ and $\lambda$ do not uniquely define a ring
homomorphism (e.~g.~if $a\mapsto0$ and $\lambda\mapsto 1$),
thus it is necessary to specify the image of $\delta$. 

The relations in the $R$-module $\mathcal{U}_{I,J}$ are
\[
\begin{array}{c@{\qquad}c}
  (O1)& \raisebox{-.4cm}{\epsfbox{relations.1}}- 
  \raisebox{-.4cm}{\epsfbox{relations.2}}=(q^{-1}-q)
  \raisebox{-.4cm}{\epsfbox{relations.3}}\\
  (O2)& \raisebox{-.4cm}{\epsfbox{relations.4}}
  =\raisebox{-.4cm}{\epsfbox{relations.5}}=
  [n]_q\\
  (O3)&\raisebox{-.4cm}{\epsfbox{relations.6}}=
  \raisebox{-.4cm}{\epsfbox{relations.7}}=q^n\;
  \raisebox{-.4cm}{\epsfbox{relations.11}}\\
  (O4)&\raisebox{-.4cm}{\epsfbox{relations.8}}=
  \raisebox{-.4cm}{\epsfbox{relations.9}}=q^{-n}\;
  \raisebox{-.4cm}{\epsfbox{relations.11}}\\
\end{array}
\]

Let $S$ be an oriented tangle of type $(I,J)$ and $T$ be an oriented
tangle of type 
$(J,K)$. Then all the strands at the bottom vertices of $S$ have the
same direction (up or down) as the corresponding strands at the top
row of $T$. Thus if one places $S$ above $T$ and identifies the bottom
row of vertices in $S$ with the top row in $T$, one gets an oriented tangle of
type $(I,K)$. Denote this concatenated oriented 
 tangle by $S|T$. Concatenation
induces $R$-linear maps $\mathcal{U}_{I,J}'\times
\mathcal{U}_{J,K}'\to \mathcal{U}_{I,K}'$ and  $\mathcal{U}_{I,J}\times
\mathcal{U}_{J,K}\to \mathcal{U}_{I,K}$. In particular,
$\mathcal{U}_{I,I}'$ and $\mathcal{U}_{I,I}$ are
associative  $R$-algebras respectively,
the multiplication given by concatenation
of oriented tangles. 

\begin{defn}\label{defn:qwalledbrauer}
  Let
  $\mathfrak{B}_{r,s}^n(a,\lambda,\delta):=\mathcal{U}_{(\downarrow^r
    \uparrow^s),(\downarrow^r \uparrow^s)}'$ and
  $\mathfrak{B}_{r,s}^n(q):=\mathcal{U}_{(\downarrow^r
    \uparrow^s),(\downarrow^r
    \uparrow^s)}=\mathfrak{B}_{r,s}^n(q^{-1}-q,q^n,[n]_q)$.  Then
  $\mathfrak{B}_{r,s}^n(a,\lambda,\delta)$ and
  $\mathfrak{B}_{r,s}^n(q)$ are associative $R$-algebras which we call
  the \emph{quantized walled Brauer algebras}.
\end{defn}


Note that for $q=1$ 
relation~$(O1)$ means that we don't have to
distinguish over- and under-crossings, relations~$(O3)$ and $(O4)$ are
just the Reidemeister Moves~I. We will show that $\mathcal{U}_{I,J}'$
and $\mathcal{U}_{I,J}$ have
$R$-bases indexed by certain oriented tangles which are the
$q$-analogues of Brauer diagrams in the classical case $q=1$.  If $q$
is not necessarily $1$, then using relation~$(O1)$  
one can switch
crossings in an oriented tangle (modulo a linear combination of
oriented tangles with fewer crossings), in particular one can move a
strand on top of all other strands, a second strand on top of the
remaining strands and so on. The relations $(O1)$, $(O3)$ and $(O4)$
can be used to untangle the strands and closed cycles, and finally
relation~$(O2)$ can be used to eliminate the unknotted cycles.  This
serves as motivation for the following definition of descending
oriented tangles, since each oriented tangle can be written as a linear
combination of descending ones (with respect to a given total ordering
on the starting vertices).

\begin{defn}\label{definition:descending}
  Let $T$ be an oriented tangle of type $(I,J)$.  Chose a total
  ordering $\preceq$ on the starting vertices of $T$.  We say that $T$
  is \emph{descending} with respect to $\preceq$ if the following
  conditions hold:
  \begin{enumerate}
  \item  \label{descending_1}No strand crosses itself.
  \item  \label{descending_2}Two strands cross at most once. 
  \item  \label{descending_3}There are no closed cycles.
  \item  \label{descending_4}
    If two strands $s_1$ and $s_2$ with $b(s_1)\prec b(s_2)$
    cross, then $s_1$ over-crosses $s_2$.
\end{enumerate}
\end{defn}
This means that a strand $s$ lies on top of all strands $t$ with
beginning vertex $b(t)\succ b(s)$. Note that each oriented tangle satisfying
\ref{descending_1}, \ref{descending_3} and \ref{descending_4}
is regularly isotopic to a descending oriented tangle.


We will show that the descending oriented tangles (up to regular isotopy) with
respect to some fixed total ordering on the starting vertices form 
bases for $\mathcal{U}_{I,J}'$ and $\mathcal{U}_{I,J}$.

A \emph{Brauer diagram} is an (unoriented) graph consisting of $2m$ vertices
arranged in two rows and $m$ edges, such that each vertex is an
endpoint of exactly one edge.  If $T$ is an oriented tangle, let
$c(T)$ be the Brauer diagram obtained by connecting the vertices which
are connected by a strand in the oriented tangle $T$, so closed
circles are ignored.  We call $c(T)$ the \emph{connector} of $T$.  We
say, that a Brauer diagram is of type $(I,J)$, if it is the connector
of an oriented tangle of type $(I,J)$.  Note that for a given Brauer
diagram $c$, there are always different indices, such that $c$ is the
connector of oriented tangles of these types.

Furthermore, there are precisely $m!$ Brauer diagrams of type $(I,J)$:
For a given type $(I,J)$ there are $m$ starting vertices and $m$ end
vertices.  In a Brauer diagram of type $(I,J)$ the edges always
connect a starting vertex with an ending vertex. Thus the Brauer
diagrams of type $(I,J)$ are in one-to-one correspondence with
bijections from the set of starting vertices to the set of ending
vertices, and there are $m!$ such bijections.

Fix some total ordering on the set of starting vertices. For each
Brauer diagram $c$ of type $(I,J)$ choose an oriented tangle $T_c$ of type
$(I,J)$ with connector $c$, which is descending with respect to the
fixed total ordering.  Consider the strand in $T_c$ starting at the
first starting vertex. The Brauer diagram $c$ determines the end
vertex of this strand. Furthermore, this strand does not cross itself,
thus if $T_c'$ is another descending oriented tangle with connector $c$, one
can use the Reidemeister Moves II and III to move the top strand of
$T_c'$ to the position of the top strand of $T_c$. A similar
argument works for the other strands, thus $T_c$ and $T_c'$ are
regularly isotopic and we have a one-to-one correspondence between the
set of Brauer diagrams of type $(I,J)$ and the set of isotopy classes
of descending oriented tangles of type $(I,J)$ with respect to a given total
ordering of the starting vertices.

\begin{lem}{(\cite{weimer})}\label{lem:basis} 
  Fix a total ordering on the set of starting vertices and let
  $\mathcal{B}=\{T_c\}$ where $c$ runs
  through the set of Brauer diagrams of type $(I,J)$.
  Then
  $\mathcal{U}_{I,J}'$ is $R$-free of rank $m!$ with basis
    $\mathcal{B}$ for any specialization $\Lambda\to R$. In particular
    $\mathcal{U}_{I,J}$ is $R$-free with basis $\mathcal{B}$.
  \end{lem}
\begin{proof}
  In his diploma thesis~\cite{weimer},  F.~Weimer  showed the special
  case $I=J=(\downarrow^r\uparrow^s)$ for $R=\Lambda$.
  Actually, the restriction to $(\downarrow^r\uparrow^s)$ is not
  essential, if $I$ and $J$ have $r$ entries equal to $\downarrow$ and
  $s$ entries equal to $\uparrow$, then $\mathcal{U}_{I,J}$ and 
  $\mathcal{U}_{(\downarrow^r
    \uparrow^s),(\downarrow^r \uparrow^s)}$ are isomorphic as
  $R$-modules, the isomorphism is given by pre- and post-multiplication
  with appropriate invertible oriented tangles. 

   The proof of the lemma follows exactly  the proof of Morton and Wassermann
   (\cite{mortwas})  
   for the Birman--Wenzl--Murakami algebra.
   Instead of the knot invariant Morton and Wassermann used, this
   proof needs a knot invariant of oriented knots, whose existence is
   assured in  \cite{homfly}. With this invariant, the linear
   independence of $\mathcal{B}$ can be shown. 
   The specialized case follows from change
   of base rings. 
\end{proof}
\begin{cor}\label{cor:basis}
  For each Brauer diagram c of type $(I,J)$ choose an oriented  tangle
  $S_c$ of type $(I,J)$ with
  connector $c$ without self-crossings and closed cycles such that two
  strands cross at most once. Then the $S_c$ form a basis of
  $\mathcal{U}'_{I,J}$ resp.~of $\mathcal{U}_{I,J}$. 
\end{cor}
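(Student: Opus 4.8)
The plan is to exhibit the family $\{S_c\}$ as the image of the basis $\mathcal{B}=\{T_c\}$ of Lemma~\ref{lem:basis} under an upper unitriangular change of basis; such a change of basis is invertible over $R$ (resp.\ over $\Lambda$), so $\{S_c\}$ is again a basis. To set this up I first attach to each Brauer diagram $c$ of type $(I,J)$ the integer $\nu(c)$, the number of pairs of strands of $c$ whose four endpoints interleave along the boundary of the enclosing rectangle (traversed along the top edge from $t_1$ to $t_m$ and then along the bottom edge from $b_m$ to $b_1$). A Jordan-curve parity argument shows that two distinct strands of any tangle diagram cross an odd, resp.\ even, number of times according as their endpoints do, resp.\ do not, interleave; hence a tangle of type $(I,J)$ with connector $c$ in which no strand crosses itself, no two strands cross more than once, and there are no closed cycles has exactly $\nu(c)$ crossings. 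In particular the chosen $S_c$ and the descending representative $T_c$ both have $\nu(c)$ crossings.

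The second ingredient is the auxiliary statement that every tangle of type $(I,J)$ with at most $k$ crossings lies in the $R$-span (resp.\ $\Lambda$-span) of $\{T_{c'} : \nu(c')\le k\}$. This is the normal-form reduction already sketched before the definition of descending tangles: one uses $(O1)$ (resp.\ $(O1')$) to switch crossings and to delete a crossing whenever a strand crosses itself or two strands cross twice, $(O3),(O4)$ (resp.\ $(O3'),(O4')$) to remove Reidemeister-I kinks, $(O2)$ (resp.\ $(O2')$) to delete free loops at the cost of the scalar $[n]_q$ (resp.\ $\delta$), and the Reidemeister moves II and III to rearrange strands; this can be organised so as never to increase the number of crossings, and it terminates in a linear combination of descending tangles, each of which equals the corresponding $T_{c'}$ in $\mathcal{U}_{I,J}$ (resp.\ $\mathcal{U}'_{I,J}$) by the uniqueness of descending tangles with a given connector, and each with at most $k$ crossings.

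With these in hand the main step is the expansion
\[
S_c = T_c + \sum_{\nu(c')<\nu(c)} a_{c',c}\,T_{c'} \qquad (a_{c',c}\in R,\ \text{resp.\ }\Lambda),
\]
proved by induction on $\nu(c)$. Since $S_c$ is reduced with connector $c$, its crossings are exactly the (single) crossings of the interleaving pairs of strands; let $d(S_c)$ count those at which the over/under datum disagrees with the descending rule attached to the fixed total ordering of the starting vertices. If $d(S_c)=0$ then $S_c$ satisfies all four conditions for being a descending tangle, hence is regularly isotopic to $T_c$, so $S_c=T_c$ (this also disposes of the base case $\nu(c)=0$, where $d(S_c)\le\nu(c)=0$). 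If $d(S_c)>0$, choose an offending crossing and apply $(O1)$ to switch it: this gives $S_c=S_c^{(1)}\pm(q^{-1}-q)X$ (resp.\ $\pm a\,X$), where $S_c^{(1)}$ is again reduced with connector $c$ and $d(S_c^{(1)})=d(S_c)-1$, while $X$ is a tangle of type $(I,J)$ with $\nu(c)-1$ crossings. Iterating at most $\nu(c)$ times brings $d$ down to $0$ and writes $S_c$ as $T_c$ plus a combination of scalar multiples of tangles each with at most $\nu(c)-1$ crossings; the auxiliary statement places the latter in the span of $\{T_{c'}:\nu(c')<\nu(c)\}$. Ordering the Brauer diagrams of type $(I,J)$ by any refinement of $c'\preceq c\iff\nu(c')\le\nu(c)$, the transition matrix from $\{S_c\}$ to $\{T_c\}$ is upper unitriangular, hence invertible over $R$ (resp.\ $\Lambda$), and Lemma~\ref{lem:basis} completes the proof for both $\mathcal{U}_{I,J}$ and $\mathcal{U}'_{I,J}$.

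The step I expect to require the most care is the auxiliary statement: one must check that the moves $(O1)$--$(O4)$ and the Reidemeister moves used in the normal-form reduction can be applied without ever increasing the crossing count and that the process terminates, and separately that a reduced diagram with connector $c$ realises exactly $\nu(c)$ crossings. Both are geometrically plausible but rest on the parity argument and a careful inspection of the defining relations; once they are in place, the remainder is routine bookkeeping with the unitriangular change of basis.
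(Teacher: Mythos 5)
Your proposal is correct and follows essentially the same route as the paper: both arguments use relation $(O1)$ to switch the crossings of $S_c$ that violate the descending rule, writing $S_c$ as $T_c$ plus a combination of tangles with strictly fewer crossings, and conclude via an upper unitriangular change of basis ordered by crossing number. Your introduction of the interleaving count $\nu(c)$ and the Jordan-curve parity argument simply makes explicit the fact, used implicitly in the paper, that the minimal crossing number depends only on the connector.
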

\begin{proof}
  Using relation $(O1)$, 
  each such oriented tangle can be written as the same oriented tangle with the
  orientation of certain crossings changed plus a linear combination of
  oriented tangles with fewer crossings.
  Inductively, each $S_d$ can be written as $T_d$ plus a 
  linear combination of $T_c$'s such that $c$ has fewer crossings than
  $d$. Choose an ordering on the set of Brauer diagrams of type
  $(I,J)$ that is compatible with the number of crossings. Let $M$ be
  the matrix whose columns consist of the coefficients of these linear
  combinations. Then $M$ is triangular with ones on the diagonal, and
  thus is invertible. It follows that $\{S_c\}$ is as well a basis and
  $M$ is the base change matrix.  
\end{proof}

\begin{defn}\label{defn:Hecke}
  If $R$ is a commutative ring with $1$ and $q\in R$ is invertible then
  let $\mathcal{H}_m$ be the associative $R$-algebra with identity generated by
  $T_1,\ldots,T_{m-1}$ subject to the relations
  \[
  \begin{array}{rcll}
    (T_i+q)(T_i-q^{-1})&=&0&\text{ for }i=1,\ldots,m-1\\
    T_iT_{i+1} T_i&=&T_{i+1} T_iT_{i+1}&\text{ for }i=1,\ldots,m-2\\
    T_iT_j&=&T_jT_i &\text{ for }|i-j|\geq 2.
  \end{array}
  \]
  The algebra $\mathcal{H}_m$ is called the
  \emph{Hecke algebra}.
\end{defn}

We remark that the first defining relation $(T_i+q)(T_i-q^{-1})=0$
above is often replaced by a relation of the form
$(\tilde{T_i}+\tilde{q})(\tilde{T_i}-1)=0$. The two approaches are
easily seen to be equivalent, via the transformation given by
$\tilde{T_i}=qT_i$, $\tilde{q}=q^2$.

\begin{cor}
  $\mathfrak{B}_{m,0}^n(q)$  and $\mathcal{H}_m$
  are isomorphic as $R$-algebras.
\end{cor}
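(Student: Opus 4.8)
The plan is to exploit the presentation of $\mathcal{H}_m$ in Definition~\ref{defn:Hecke}: I would write down an $R$-algebra homomorphism $\phi\colon\mathcal{H}_m\to\mathcal{U}_{(\downarrow^m),(\downarrow^m)}$ by prescribing the images of the generators $T_i$, check that the defining relations survive, and then prove that $\phi$ is bijective by showing that it carries the standard $R$-basis of $\mathcal{H}_m$ onto an $R$-basis of $\mathcal{U}_{(\downarrow^m),(\downarrow^m)}$ of the kind produced by Corollary~\ref{cor:basis}. The guiding observation is that in a tangle of type $(\downarrow^m,\downarrow^m)$ every strand runs from the top edge to the bottom edge, so no two vertices on the same edge can be joined: such tangles have no cups or caps, whence among $(O1)$--$(O4)$ only the skein relation $(O1)$ is ever applied, and the descending basis tangles of Lemma~\ref{lem:basis} are braid tangles indexed by the bijections $\{t_1,\dots,t_m\}\to\{b_1,\dots,b_m\}$, that is, by $\mathfrak{S}_m$.

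First I would set $\phi(T_i)=\beta_i$, where $\beta_i$ is the tangle having a single crossing of the adjacent downward strands in positions $i$ and $i+1$ (all other strands vertical), the over/under pattern being chosen as the first of the two crossings appearing in relation $(O1)$. To see that $\phi$ is well defined I would verify the relations of $\mathcal{H}_m$ in the target. Reidemeister move $\mathrm{RM\,II}$ shows that the opposite crossing is a two-sided inverse of $\beta_i$, so $(O1)$ becomes $\beta_i-\beta_i^{-1}=q^{-1}-q$ in $\mathcal{U}_{(\downarrow^m),(\downarrow^m)}$; multiplying by $\beta_i$ gives $\beta_i^{2}-(q^{-1}-q)\beta_i-1=0$, i.e. $(\beta_i+q)(\beta_i-q^{-1})=0$, the quadratic Hecke relation. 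The braid relation $\beta_i\beta_{i+1}\beta_i=\beta_{i+1}\beta_i\beta_{i+1}$ is exactly $\mathrm{RM\,III}$, and $\beta_i\beta_j=\beta_j\beta_i$ for $|i-j|\ge 2$ is a planar isotopy fixing the endpoints. Hence $\phi$ is an $R$-algebra homomorphism.

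To prove bijectivity I would use the standard $R$-basis $\{T_w:w\in\mathfrak{S}_m\}$ of $\mathcal{H}_m$, where $T_w=T_{i_1}\cdots T_{i_\ell}$ for a reduced expression $w=s_{i_1}\cdots s_{i_\ell}$ (independent of the chosen expression, by the braid relations). Then $\phi(T_w)=\beta_{i_1}\cdots\beta_{i_\ell}$ is a word in the $\beta_j$ (no inverses) coming from a reduced expression; as such it is a tangle with no self-crossings, no closed cycles and with each pair of strands crossing at most once, and its connector records the bijection $w$, so $w\mapsto\phi(T_w)$ realizes the $m!$ Brauer diagrams of type $(\downarrow^m,\downarrow^m)$, one for each. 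By Corollary~\ref{cor:basis} the family $\{\phi(T_w):w\in\mathfrak{S}_m\}$ is therefore an $R$-basis of $\mathcal{U}_{(\downarrow^m),(\downarrow^m)}$. Thus $\phi$ sends an $R$-basis bijectively to an $R$-basis; being also an algebra map, it is an isomorphism of $R$-algebras.

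The obstacle here is bookkeeping rather than anything conceptual: one must match the sign in $(O1)$ with the quadratic relation of Definition~\ref{defn:Hecke} (which is what forces $T_i$ to correspond to the \emph{first} crossing in $(O1)$, the second being $\beta_i^{-1}$ via $\mathrm{RM\,II}$), and one must invoke the standard facts that a reduced word for $w$ yields a braid whose strands cross pairwise at most once and which realizes $w$, so that Corollary~\ref{cor:basis} applies. If one prefers to avoid quoting the standard basis theorem for $\mathcal{H}_m$, the same conclusion follows by noting that $\beta_i^{-1}=\phi(T_i-(q^{-1}-q))$ lies in the image, hence all braid tangles and, by Lemma~\ref{lem:basis}, all of $\mathcal{U}_{(\downarrow^m),(\downarrow^m)}$ do, so $\phi$ is surjective; since both algebras are $R$-free of rank $m!$, a surjection between them is an isomorphism.
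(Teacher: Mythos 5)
Your proposal is correct and follows essentially the same route as the paper: send the generators $T_i$ to the single-crossing tangles, check the Hecke relations via $(O1)$ and the Reidemeister moves to get a surjection $\mathcal{H}_m\to\mathcal{U}_{(\downarrow^m),(\downarrow^m)}$, and then match the standard basis $\{T_w\}$ with the $m!$ basis tangles of Lemma~\ref{lem:basis}/Corollary~\ref{cor:basis} to conclude bijectivity. Your use of Corollary~\ref{cor:basis} (rather than insisting the braid tangles from reduced words are literally descending) and the alternative finish via equal free rank are minor, sound variations on the paper's argument.
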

\begin{proof}
  The generators 
  $\raisebox{-.1cm}{\epsfbox{si.30}}$ of $\mathfrak{B}_{m,0}^n(q)$
  satisfy the defining relations
  of the $T_i\in \mathcal{H}_m$.
  Thus $\mathfrak{B}_{m,0}^n(q)$ is an epimorphic
  image of $\mathcal{H}_m$. For $w\in\mathfrak{S}_m$ let
  $w=s_{i_1}s_{i_2}\ldots s_{i_l}$ be a reduced expression in terms of
  Coxeter generators and let
  $T_w= T_{i_1}T_{i_2}\ldots T_{i_l}$. It is known that $T_w$ does not
  depend on the reduced expression and $\{T_w\mid w\in\mathfrak{S}_m
  \}$ is a basis of $\mathcal{H}_m$. The images of these basis
  elements form a basis of
  $\mathfrak{B}_{m,0}^n(q)$ by Corollary~\ref{cor:basis}.
\end{proof}

Now we can connect our construction and results to those of Leduc
and Kosuda--Murakami.

\begin{defn}[\cite{leduc}]
Let $\mathcal{A}_{r,s}$ be the  $\Lambda$-algebra
generated by the elements
$g_1,\ldots,g_{r-1}$, $g_1^*,\ldots,g_{s-1}^*$ and $D$ subject to the
following relations
\[
\begin{array}{cl@{\qquad}cl}
 (\mathrm i)&g_ig_j=g_jg_i, |i-j|\geq 2&
 (\mathrm i^*)&g_i^*g_j^*=g_j^*g_i^*, |i-j|\geq 2\\
 (\mathrm{ii})&g_ig_{i+1}g_i=g_{i+1}g_ig_{i+1}&
 (\mathrm{ii}^*)&g_j^*g_{j+1}^*g_j^*=g_{j+1}^*g_j^*g_{j+1}^*\\
 (\mathrm{iii})&g_i^2+ag_i-1=0&
 (\mathrm{iii}^*)&{g_j^*}^2+ag_j^*-1=0\\
 (\mathrm{iv})&g_ig_j^*=g_j^*g_i\\
 (\mathrm{v})&Dg_i=g_iD, 2\leq i\leq r-1&
 (\mathrm{v}^*)&Dg_j^*=g_j^*D, 2\leq j\leq s-1\\
 (\mathrm{vi})&Dg_1D=\lambda^{-1} D&
 (\mathrm{vi}^*)&Dg_1^*D=\lambda^{-1} D\\
 (\mathrm{vii})&D^2=\delta D\\
 (\mathrm{viii})&Dg_1^{-1}g_1^*Dg_1=Dg_1^{-1}g_1^*Dg_1^* &
 (\mathrm{viii}^*)&g_1Dg_1^{-1}g_1^*D=g_1^*Dg_1^{-1}g_1^*D
\end{array}
\]
Note that  $(\mathrm{iii})$  implies that $g_1$ is invertible with
$g_1^{-1}=g_1+a$.

If $f:\Lambda\to R$ is a specialization homomorphism, let
$\mathcal{A}_{r,s}(R)$ be the algebra given by the same generators and
relations with specialized coefficients. 
\end{defn}
The definition of Leduc in \cite{leduc} used a different set of
parameters. One may recover Leduc's algebra as the specialization
$\Lambda\to\mathbb{C}(z,q): a\mapsto q-q^{-1}, \lambda\mapsto z^{-1},
\delta\mapsto (z-z^{-1})/(q-q^{-1})$.

\begin{cor}
  For any specialization homomorphism $\Lambda\to R$, the $R$-algebras 
  $\mathfrak{B}_{r,s}^n(a,\lambda,\delta)$
  and $\mathcal{A}_{r,s}(R)$
  are isomorphic. 
  In particular
  for $R=\mathbb{C}(q)$,
  $\mathfrak{B}_{r,s}^n(q)$ is isomorphic to the algebra
  $H_{r,s}^n(q)$ in
  \cite{kosudamurakami}.
\end{cor}

\begin{proof}
  It is easy to verify that for any specialization
  there is a well defined  homomorphism of
  algebras  $\mathcal{A}_{r,s}(R)\to 
  \mathfrak{B}_{r,s}^n(a,\lambda,\delta)$ given by
  \begin{alignat*}{2}
    &1\mapsto\raisebox{-.1cm}{\epsfbox{walledtangles.4}}
    \quad&g_i\mapsto \raisebox{-.1cm}{\epsfbox{walledtangles.2}}\\
    &D\mapsto\raisebox{-.1cm}{\epsfbox{walledtangles.1}}
    \qquad&g_j^*\mapsto \raisebox{-.1cm}{\epsfbox{walledtangles.3}}
  \end{alignat*}
  where the crossings affect the $r-i$-th and $r-i+1$-st vertex  and the 
  $r+j$-th and $r+j+1$-st vertex respectively.

  The algebra $\mathcal{A}_{r,s}(R)$ has a basis introduced in
  \cite{leduc} called the standard monomial basis. Let
  $\mathfrak{S}_r$ and
  $\mathfrak{S}_s$ be the symmetric groups
  on $r$ and $s$ letters respectively. Let  $w=s_{x_1}\cdots
  s_{x_l}\in  \mathfrak{S}_r$ and $v= s_{y_1}\cdots s_{y_k}\in
  \mathfrak{S}_s$ be reduced words in terms of Coxeter
  generators.  Furthermore let $0\leq N\leq r,s$ and let $0\leq i_1\leq
  i_2\leq \ldots\leq i_N\leq s-1$ and $0\leq j_N\leq
  j_{N-1}\leq \ldots\leq j_1\leq r-1$. 
  Then to this data is attached a basis element in the following way:
  Let $g_w= g_{x_1}\cdots g_{x_l}$ and $g_v^*= g_{y_1}^*\cdots g_{y_k}^*$.
  Note that in \cite{leduc}, the reduced expressions are of a
  particular form, but choosing another reduced expression will lead
  to the same element. Let $W_i=g_ig_{i-1}\cdots g_1$ and
  $W_j^*=g_j^*g_{j-1}^*\cdots g_1^*$. Then the attached basis element
  is the element
  \[
  g_w  W_{i_1}^*DW_{j_1}^{-1}  W_{i_2}^*DW_{j_2}^{-1} \cdots
  W_{i_N}^*DW_{j_N}^{-1}   g_v^*. 
  \]
  Consider the image of such a basis element under the algebra
  homomorphism  $\mathcal{A}_{r,s}(R)\to 
  \mathfrak{B}_{r,s}^n(a,\lambda,\delta)$.  The connector of this
  image is the Brauer 
  diagram obtained by replacing each over- or undercrossing by a
  crossing. The numbers $i_1+1,\ldots,i_N+1$ indicate the ending vertices
  of horizontal edges starting at the first $r$ vertices in the top
  row in this Brauer diagram.
  The numbers $j_1+1,\ldots,j_N+1$ similarly indicate the  ending vertices
  of horizontal edges starting at the last $s$ vertices in the bottom
  row. The elements $w$ and $v$ permute these edges respectively. Thus
  different data leads to different connectors of the image of the
  corresponding monomial basis element and each Brauer diagram of type
  $(\downarrow^r\uparrow^s)$ can be obtained in this way. 

  By construction, the image of a standard monomial basis element does
  not contain self-crossings, closed cycles or strands that cross more
  than once. By Corollary~\ref{cor:basis} the algebra homomorphism 
  $\mathcal{A}_{r,s}(R)\to 
  \mathfrak{B}_{r,s}^n(a,\lambda,\delta)$ maps a basis to a basis and
  is thus an isomorphism. 
\end{proof}

Note that in  the classical case, i.~e.~for $q=1$,
the walled Brauer algebra is a
subalgebra of the Brauer algebra. There is also a $q$-deformation of
the Brauer algebra, the BMW-algebra (Birman-Murakami-Wenzl algebra, 
see \cite{birmanwenzl}), which
has also a tangle description. Unlike the quantized walled Brauer
algebra, the tangles used for the BMW-algebra are not oriented and
relation $(O1)$ involves an additional term. Thus, the quantized
walled Brauer algebra is not a subalgebra of the BMW-algebra, at least
not in a canonical way.

\section{The action of  oriented tangles}\label{section:tangleaction} 
From now on, let $R$ be a
commutative ring with $1$ and $q\in R$ invertible.  In this section we
will assign an $R$-linear map to each oriented tangle of type $(I,J)$
considered as an element of $\mathcal{U}_{I,J}$. This assignment is
compatible with concatenation of oriented tangles and composition of
maps. In particular, this map will be used to define an action of
$\mathfrak{B}_{r,s}^n(q)$ on mixed tensor space. This action coincides
with the action of $\mathcal{A}_{r,s}(R)$ given in
\cite{leduc}. However, we will give a procedure to calculate the
entries of the matrix of an arbitrary oriented tangle, and not only of
a generator.

Let $V$ be a free $R$-module of rank $n$ with basis
$\{v_1,\ldots,v_n\}$. Let $V^*=\mathrm{Hom}_R(V,R)$ be the dual module
with dual basis $\{v_1^*,\ldots,v_n^*\}$.  To each $m$-tuple $I$ with
entries in $\{\uparrow,\downarrow\}$ we associate an $m$-fold tensor
product $V_I$, such that the $k$-th component of $V_I$ is $V$ if
$I_k=\downarrow$ and $V^*$ if $I_k=\uparrow$. In particular we define
\begin{defn}
  $V_{(\downarrow^m)}=V^{\otimes m}$ is called \emph{(ordinary) tensor
  space} and 
  $V_{(\downarrow^r\uparrow^s)}=V^{\otimes r}\otimes {V^*}^{\otimes s}$
  is called \emph{mixed tensor space}. 
\end{defn}
At this point it might not be clear why we distinguish between $V$ and
$V^*$. As $R$-modules, ordinary and mixed tensor space are isomorphic.
The reason for this will become appearent when the quantum groups come
into play.

Let $I(n,m)$ be the set of $m$-tuples with entries from
$\{1,\ldots,n\}$. The elements of $I(n,m)$ are called \emph{multi
  indices}.  For $\mathbf i=(i_1,\ldots,i_m)\in I(n,m)$ let
$v_{\mathbf i}^I=x_{i_1}\otimes \ldots \otimes x_{i_m}\in V_I$ where
$x_{i_\rho}$ is either $v_{i_\rho}\in V$ or $v_{i_\rho}^*\in V^*$. Then
$\{v_{\mathbf i}^I\mid \mathbf i\in I(n,m)\}$ is a basis of $V_I$. If
$I=(\downarrow^m)$, i.~e.~$V_I$ is the ordinary tensor space we simply
write $v_{\mathbf i}$ for these basis elements.

We will now assign to each oriented tangle $T\in\mathcal{U}_{I,J}$ a
linear map $\psi_T$ from $V_I$ to $V_J$ acting on $V_I$ from the right
by giving a procedure to calculate the entries of the corresponding
matrix with rows and columns indexed by $I(n,m)$. We let these
matrices act from the right as well.

Let $\mathbf i,\mathbf j\in I(n,m)$.  Write the entries of $\mathbf i$
and $\mathbf j$ in order from left to right along the vertices of the
top and bottom row respectively of the oriented tangle.  If $\beta$ is
a vertex, let $\beta(\mathbf i,\mathbf j)$ be the entry written at the
vertex $\beta$.  We say that $T$ is \emph{descending} with respect to
$(\mathbf i,\mathbf j)$ if $T$ is descending with respect to some
total ordering $\prec$ on the starting vertices, such that
$\beta\prec\gamma$ implies $\beta(\mathbf i,\mathbf j) \leq
\gamma(\mathbf i,\mathbf j)$, that is $\prec$ is a refinement of the
ordering on starting vertices by their attached indices. So this
means, that the strands starting from vertices $\beta$ with
$\beta(\mathbf i,\mathbf j)=1$ are on top of the strands with starting
index $2$, etc.  For example for $n=3$, $m=4$, let
\[
T_1=\raisebox{-1cm}{\epsfbox{desc.1}}\text{ and }
T_2=\raisebox{-1cm}{\epsfbox{desc.2}}
\]
Then $T_1$ 
is descending with respect to
$((1,2,3,2),(2,3,1,3))$. 
The total ordering on the starting vertices can be chosen to be
$t_1\prec
t_4\prec b_1\prec t_2$, $t_1\prec
t_4\prec t_2\prec b_1$ or
$t_1\prec t_2\prec  t_4\prec b_1$.
$T_2$ is not 
descending with respect to $((1,2,3,2),(2,3,1,3))$.

\smallskip
\begin{proc}\label{procedure}
  Let $(I,J)$ be a type and $\mathbf i,\mathbf j\in I(n,m)$ be fixed.
  Choose a total ordering on the set of starting vertices compatible
  with the partial order on the starting vertices by the double index
  $(\mathbf i,\mathbf j)$.

  Let $T$ be an oriented tangle of type $(I,J)$ which is descending
  with respect to this total ordering. To $T$ we define a value
  $\psi_{\mathbf i,\mathbf j}(T)\in R$ as follows.  If there is a
  strand in $T$ whose starting and ending vertex are labeled by different
  indices from the double index $(\mathbf i,\mathbf j)$, let
  $\psi_{\mathbf i,\mathbf j}(T)=0$.  If for all strands in $T$ the
  starting and ending vertex carry the same label, we may assign this
  label to the strand as well. Then let $\psi_{\mathbf i,\mathbf
    j}(T)$ be the product of the following factors (the empty product
  being $1$ as usual):
 \begin{itemize}
 \item
   a factor $q^{-1}$ for each crossing
   $\raisebox{-.1cm}{\epsfbox{crossings.1}}$ of strands labeled by the
   same number and a factor $q$ for each such crossing
   $\raisebox{-.1cm}{\epsfbox{crossings.2}}$,
 \item 
   for each horizontal strand in the upper row from left to right
   labeled by $i$ a factor $q^{2i-n-1}$, and a factor $q^{-2i+n+1}$
   for each horizontal strand in the bottom row from left to right
   labeled by $i$.
 \end{itemize}
 Note that if $T$ and $T'$ are regularly isotopic then 
 $\psi_{\mathbf i,\mathbf j}(T)=\psi_{\mathbf i,\mathbf j}(T')$, and thus
 $\psi_{\mathbf i,\mathbf j}$ is a well defined map from a basis of
 $\mathcal{U}_{I,J}$ to $R$ which extends uniquely to an $R$-linear map
 $\mathcal{U}_{I,J}\to R$.
\end{proc}
This procedure involves a choice, namely the chosen total ordering
on the starting vertices  compatible with the partial ordering
given by $(\mathbf i,\mathbf j)$. Before we show that the resulting
maps $ \psi_{\mathbf i,\mathbf j}$ do not depend on this choice
we illustrate the procedure by an example.
\begin{exmp}
  Let 
  $\mathbf i=(i_1,i_2,i_3),\mathbf j=(j_1,j_2,j_3)$ be multi indices
  with $i_1=2,j_1=1,j_3=1$ and
  $T=\raisebox{-.4cm}{\epsfbox{example.1}}$.
  Then we have
  \[
  T=\raisebox{-.4cm}{\epsfbox{example.1}}
  =\raisebox{-.4cm}{\epsfbox{example.2}}
    +(q-q^{-1})\raisebox{-.4cm}{\epsfbox{example.3}},
  \]
  written as a linear combination of 
  descending oriented tangles. We see that 
  $a_{\mathbf i,\mathbf j}\neq 0$ only if 
  $(\mathbf i,\mathbf j)=((2,1,1),(1,2,1))$ or 
  $(\mathbf i,\mathbf j)=((2,1,2),(1,1,1))$. 
  If $(\mathbf i,\mathbf j)=((2,1,1),(1,2,1))$,
  then $\psi_{\mathbf i,\mathbf j}(T)=q^{-1}$ (for the crossing of the
  strands indexed by $1$). If $(\mathbf i,\mathbf
  j)=((2,1,2),(1,1,1))$, then $\psi_{\mathbf i,\mathbf j}(T)
  =(q-q^{-1})q^{n-1}\cdot
  q^{3-n}=q^3-q$ (one
  horizontal strand from left to right on the bottom  indexed by $1$,
  one such strand on the top, indexed by $2$). 
\end{exmp}

\begin{lem}\label{lem:well-defined}
  For each $\mathbf i,\mathbf j\in I(n,m)$, 
  the map $\psi_{\mathbf i,\mathbf
    j}:\mathcal{U}_{I,J}\to R$ defined in Procedure~\ref{procedure}
  is independent 
  of the choice of 
  the total ordering
  on the starting vertices compatible with the partial ordering
  given by $(\mathbf i,\mathbf j)$.
\end{lem}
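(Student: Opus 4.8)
The plan is to reduce, via the standard fact that any two linear extensions of a partial order are connected by a chain of transpositions of consecutive incomparable elements, to the case in which the two compatible total orderings $\prec$ and $\prec'$ differ by transposing two starting vertices $\beta,\gamma$ that are consecutive in the ordering and carry the same index $i:=\beta(\mathbf i,\mathbf j)=\gamma(\mathbf i,\mathbf j)$ (here ``incomparable'' means ``equal index''). Each ordering yields a functional $\mathcal U_{I,J}\to R$, say $\psi^\prec$ and $\psi^{\prec'}$, and since the tangles $\{T_c\}$ descending with respect to $\prec$ form an $R$-basis by Lemma~\ref{lem:basis} ($c$ running over the Brauer diagrams of type $(I,J)$), it suffices to prove $\psi^\prec(T_c)=\psi^{\prec'}(T_c)$ for every $c$. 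The left-hand side is by definition the value of the product formula on $T_c$; the right-hand side is computed by first rewriting $T_c$ in the $\prec'$-descending basis using regular isotopy and the relations $(O1)$--$(O4)$.

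First I would treat the case where the strands $s(\beta)$ and $s(\gamma)$ do not cross in $T_c$. Then $T_c$ is itself descending with respect to $\prec'$: the only condition in the definition of ``descending'' that involves the ordering is that, of two crossing strands, the one whose starting vertex is smaller lies on top, and $\prec$ and $\prec'$ agree on every pair of starting vertices other than $\{\beta,\gamma\}$, which by hypothesis is not a crossing pair. Since the descending tangle with a prescribed connector is unique up to regular isotopy, $T_c$ is isotopic to the $\prec'$-basis element with connector $c$, and the product formula is a regular-isotopy invariant, so the two values coincide; in particular both vanish when $T_c$ has a strand with unequal endpoint labels. Next, suppose $s(\beta)$ and $s(\gamma)$ do cross, necessarily exactly once. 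Because $\beta\prec\gamma$, $s(\beta)$ over-crosses $s(\gamma)$, and we may assume both carry the label $i$, since otherwise $\psi^\prec(T_c)=0$ and one argues as before. Applying relation $(O1)$ at this one crossing expresses $T_c$ as $\tilde T+(q^{-1}-q)\hat T$, where $\tilde T$ is $T_c$ with that crossing switched --- hence $\prec'$-descending with connector $c$ --- and $\hat T$ is the oriented smoothing, which has strictly fewer crossings and whose two new strands run from $\beta$ to $e(s(\gamma))$ and from $\gamma$ to $e(s(\beta))$. Since $\tilde T$ is isotopic to the $\prec'$-basis element with connector $c$ and differs from $T_c$ only in that crossing, whose product-formula factor $q^{\pm1}$ is inverted when over/under is swapped, the whole question collapses to the single identity $\psi^{\prec'}(\hat T)=M$, where $M$ is the product-formula value of $T_c$ with the $\beta$--$\gamma$ crossing erased; this is proved by induction on the number of crossings, expanding $\hat T$ in turn in the $\prec'$-basis.

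\paragraph{The hard part.}
The main obstacle is exactly this last identity: that smoothing the over-crossing of two strands both labelled $i$ and then reducing leaves precisely $M$, the product-formula value with that crossing's factor removed. When the two strand segments at the crossing are parallel the oriented smoothing is a straight reconnection and this is transparent. When they are anti-parallel the smoothing introduces a cap and a cup; in the cases where $\psi^{\prec'}(\hat T)\neq0$ every vertex occurring carries the label $i$, so, read from left to right, a new top arc contributes $q^{2i-n-1}$ and a new bottom arc $q^{-2i+n+1}$, and these cancel --- while any curl created when the new arcs are joined to the rest of the tangle is removed by relation $(O3)$ or $(O4)$, whose factors $q^{\pm n}$ are designed to be compatible with these exponents. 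The mismatched-endpoint cases stay harmless, because the oriented smoothing of a crossing whose two strands have equal-indexed starting vertices never turns a strand with differently-indexed endpoints into one with equal endpoints. Carrying out this finite case analysis --- parallel versus anti-parallel, together with the bookkeeping of the powers of $q$ --- is the only substantial step; everything else is formal.
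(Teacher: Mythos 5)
Your proposal is correct and follows essentially the same route as the paper's proof: reduce to two compatible orderings differing by an adjacent transposition of equal-indexed starting vertices, observe that the non-crossing case is immediate, and in the crossing case apply relation $(O1)$ and check that the smoothed term carries the same residual factor because the contributions of the newly created top and bottom horizontal arcs (equivalently, the difference of their counts) cancel, so that $q+(q^{-1}-q)=q^{-1}$. Your treatment of the smoothed term is somewhat more detailed than the paper's one-line bookkeeping remark, but the underlying argument is identical.
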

\begin{proof}
  Suppose  that we have two different total orderings on the set of 
  starting vertices. Then the first ordering is obtained from the
  second one  by permuting starting vertices with the same index. 
  We may assume that they only differ in the total ordering of two
  starting vertices  
  $\beta_1$ and $\beta_2$ which are adjacent in both of these
  orderings,
  which means that they are indexed by the same number and lie in two
  adjacent layers. Thus if $T$ is a descending tangle with respect to
  one of these orderings, then there is no strand that
  over-crosses  one of the strands starting in $\beta_1$ and $\beta_2$
  and under-crosses the other strand.

  Let $T$ be a descending oriented tangle
  with respect to the first total ordering.
  If $s(\beta_1)$ and $s(\beta_2)$ do not cross,
  then $T$ is descending with respect to the second total ordering and
  $\psi_{\mathbf i,\mathbf  j}(T)$ is the same for both orderings.

  Suppose now that they do cross and the crossing is
  of the kind  $\raisebox{-.1cm}{\epsfbox{crossings.1}}$.
  Let $T_1$ be the tangle obtained by replacing the crossing in $T$ by
  the crossing  $\raisebox{-.1cm}{\epsfbox{crossings.2}}$ and let
  $T_2$ be the tangle obtained by replacing the crossing by
  $\raisebox{-.1cm}{\epsfbox{crossings.12}}$. Then by relation~$(O1)$
  we have  $T=T_1+(q^{-1}-q)T_2$ and $T_1$ and $T_2$ are descending
  with respect to the second ordering. 

  Let $\psi_{\mathbf i,\mathbf  j}$ be calculated with respect to
  the first ordering and $\psi_{\mathbf i,\mathbf  j}'$ with
  respect to the second ordering. 
  Since $T$ and $T_1$ have the same horizotal edges and all crossings
  except for the one crossing coincide, we get 
  $\psi_{\mathbf i,\mathbf  j}'(T_1)=q^2\psi_{\mathbf i,\mathbf
    j}(T)$. 
  Now,  the number
  of horizontal 
  strands on the top from left to right starting at a vertex with index $i$
  minus the number 
  of such strands on the bottom is the same  in $T$ and in $T_2$. Thus 
 $\psi_{\mathbf i,\mathbf  j}'(T_2)=q\psi_{\mathbf i,\mathbf
    j}(T)$. We obtain 
  \[ \psi_{\mathbf i,\mathbf  j}'(T)=\psi_{\mathbf
    i,\mathbf  j}'(T_1+(q^{-1}-q)T_2)=(q^2+(q^{-1}-q)q)
  \psi_{\mathbf i,\mathbf j}(T)= \psi_{\mathbf i,\mathbf j}(T)
  \]
  A similar
  argument works for a crossing
  $\raisebox{-.1cm}{\epsfbox{crossings.2}}$.
\end{proof}

\begin{defn}\label{psiT}
To each element $T\in \mathcal{U}_{I,J}$ we associate an element
$\psi_T\in\mathrm{Hom}_R(V_I,V_J)$, acting from the right on $V_I$, by
\[
(v_{\mathbf i}^I)\psi_T=\sum_{\mathbf j\in I(n,m)}\psi_{\mathbf
  i,\mathbf j}(T)v_{\mathbf j}^J
\] 
\end{defn}
Clearly, $\psi:T\mapsto \psi_T$ is an $R$-linear map
$\mathcal{U}_{I,J}\to \mathrm{Hom}_R(V_I,V_J)$. The $\psi_{\mathbf
  i,\mathbf j}(T)$ are the matrix entries of the matrix corresponding
to $\psi_T$ acting from the right.  The next theorem shows that the
homomorphism $\psi$ is compatible with concatenation of oriented
tangles.

\begin{thm}\label{thm:linking}
  Let $T$ be an oriented tangle of type $(I,K)$ 
  and $S$ be an oriented tangle of type
  $(K,J)$. Then
  \[\psi_{T|S}=\psi_S\circ \psi_T.\]
  Extending linearly, this formula also holds for
  $T\in\mathcal{U}_{I,K}$ and $ S\in\mathcal{U}_{K,J}$. 
\end{thm}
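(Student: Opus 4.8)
The plan is to reduce the statement to the case in which one of the two tangles is ``elementary'' and then bootstrap using associativity of concatenation. Both sides of the claimed identity are $R$-bilinear in $(T,S)$ and invariant under regular isotopy, so it suffices to treat genuine oriented tangles; unwinding the definition of $\psi_T$, the assertion $\psi_{T/S}=\psi_S\circ\psi_T$ amounts to the identity of matrix entries
\[
\psi_{\mathbf i,\mathbf k}(T/S)=\sum_{\mathbf j\in I(n,m)}\psi_{\mathbf i,\mathbf j}(T)\,\psi_{\mathbf j,\mathbf k}(S)\qquad\text{for all }\mathbf i,\mathbf k\in I(n,m).
\]
Call an oriented tangle \emph{elementary} if, outside of one pair of adjacent positions (for a single positive or negative crossing) or two pairs of adjacent positions (for a cap on the top row together with a cup on the bottom row), it consists of vertical strands. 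By the standard height-function decomposition of tangle diagrams, every oriented tangle of type $(I,J)$ is regularly isotopic to a vertical concatenation $U_1/U_2/\cdots/U_a$ of elementary tangles.

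The heart of the matter is the following Key Lemma: $\psi_{U/S}=\psi_S\circ\psi_U$ whenever $U$ is an elementary tangle of type $(I,J)$ and $S$ is an oriented tangle of type $(J,K)$. To prove it, fix $\mathbf i,\mathbf k$, choose a descending representative of $S$ with respect to an ordering compatible with the relevant indices, and note that $U/S$ is that diagram with the slice $U$ attached at the top in the positions where $U$ differs from the identity. If $U$ is the identity there is nothing to prove. If $U$ is a crossing, attaching it either leaves the diagram descending after a harmless reordering of the two affected starting vertices (legitimate by Lemma~\ref{lem:well-defined}), or creates a double crossing of two strands, which one removes by a single application of relation~$(O1)$; the coefficients $q^{-1},q,q^{-1}-q$ occurring there are exactly the entries of the Hecke-type matrix $\psi_U$ dictated by the crossing rule in Procedure~\ref{procedure}, so the reduction reproduces the product $\psi_S\circ\psi_U$. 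If $U$ is a cap together with a cup, then $U/S$ is $S$ with its two top strands at the cup positions spliced into one, plus a new horizontal strand on the top row; the new horizontal strand contributes the factor of a top horizontal strand (as in Procedure~\ref{procedure}) when the two indices agree and forces the entry to be $0$ otherwise, which matches the corresponding entries of $\psi_U$. Tracing the spliced strand through $S$ shows it becomes a new horizontal strand, a propagating strand, or — when $S$ already connected those two top vertices — a closed cycle, removed by relation~$(O2)$ at the cost of $[n]_q$; in every case the resulting expression is exactly the sum $\sum_{j=1}^{n}\psi_{\mathbf i,(\ldots,j,j,\ldots)}(U)\,\psi_{(\ldots,j,j,\ldots),\mathbf k}(S)$, the closed-cycle case being accounted for by $\sum_{j=1}^{n}q^{\,n+1-2j}=[n]_q$. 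Any kinks produced when attaching $U$ are removed by $(O3),(O4)$ and are likewise recorded correctly by $\psi_U$. Each of these is a short finite check over the admissible orientation patterns.

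Granting the Key Lemma, induction on $a$ gives $\psi_{U_1/\cdots/U_a}=\psi_{U_a}\circ\cdots\circ\psi_{U_1}$: for $a>1$ write $U_1/\cdots/U_a=U_1/(U_2/\cdots/U_a)$, apply the Key Lemma with $U=U_1$, and then the inductive hypothesis to $U_2/\cdots/U_a$. Hence, for arbitrary $T=U_1/\cdots/U_a$ of type $(I,J)$ and $S=W_1/\cdots/W_b$ of type $(J,K)$, applying this to the concatenation $T/S=U_1/\cdots/U_a/W_1/\cdots/W_b$ yields
\[
\psi_{T/S}=\psi_{W_b}\circ\cdots\circ\psi_{W_1}\circ\psi_{U_a}\circ\cdots\circ\psi_{U_1}=(\psi_{W_b}\circ\cdots\circ\psi_{W_1})\circ(\psi_{U_a}\circ\cdots\circ\psi_{U_1})=\psi_S\circ\psi_T,
\]
and extending bilinearly proves the theorem. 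The main obstacle is the Key Lemma, and within it the cap--cup case: one must verify, for every admissible orientation pattern, that the interaction of the internal cup of $U$ with the strands of $S$ — together with the possible appearance of kinks and closed cycles — matches the matrix product entry by entry, with the normalization $[n]_q=\sum_{j=1}^n q^{\,n+1-2j}$ being precisely what makes the closed-cycle contribution come out right.
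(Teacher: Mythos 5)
Your overall architecture is essentially the paper's own, mirrored: the paper reduces the second factor $S$ to a single ``basic'' tangle (Lemma~\ref{lem:basictangles}), verifies multiplicativity of $\psi$ against that one local piece by hand, and bootstraps by an induction on the number of crossings of $T$; you instead slice $T$ into elementary pieces from the top and verify multiplicativity of one elementary piece against an arbitrary tangle below it. Both reductions are legitimate, and your bootstrap (peel off $U_1$, induct, regroup the composite) is fine. The problem is that your Key Lemma is precisely where all of the content of the theorem lives, and your one-paragraph sketch does not establish it: the paper's counterpart occupies Lemmas~\ref{lem:mupltiplicative_m=2}--\ref{lem:multiplicative_noncrossing} plus the crossing-number induction, and the issues it must resolve are not the ones your sketch disposes of in passing.

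Concretely: (a) the trichotomy that drives the paper's proof --- the two strands of the arbitrary tangle meeting the attached local piece may coincide, may be distinct and non-crossing, or may cross each other --- is exactly the case split you compress into ``harmless reordering or one application of $(O1)$,'' but the reordering permitted by Lemma~\ref{lem:well-defined} is only available when the two affected starting vertices carry the \emph{same} index; when they carry different indices the ordering is forced, the concatenated diagram need not be descending, and the $(q^{-1}-q)$ correction terms must be matched against the matrix product entry by entry (this is the $i<j$, $i=j$, $i>j$ analysis in Lemma~\ref{lem:multiplicative_noncrossing}). (b) A closed cycle created by splicing a cup into a cap is in general linked with the other strands, so $(O2)$ does not apply until the cycle has been untangled via $(O1)$, $(O3)$, $(O4)$, and the factors produced by that untangling have to be shown to cancel or to be absorbed into the claimed $[n]_q=\sum_j q^{\,n+1-2j}$; asserting the identity of the sum is not the same as verifying that the geometry produces it. (c) Procedure~\ref{procedure} attaches the factors $q^{\pm(2i-n-1)}$ only to \emph{left-to-right} horizontal strands, and merging strands at a concatenation can convert a horizontal strand into a propagating one (or vice versa) and can reverse which row carries the horizontal strand; tracking this bookkeeping is a substantive part of the paper's argument and is invisible in your sketch. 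None of these points makes your route unworkable --- it is the paper's route up to reflection --- but as written the proof of the Key Lemma is a statement of intent rather than an argument, so the proposal has a genuine gap exactly at the step you yourself identify as the main obstacle.
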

The proof of this theorem is done in several steps. Note that the
theorem is equivalent to the formula $\psi_{\mathbf i,\mathbf j}(T|S)=
\sum_{\mathbf k\in I(n,m)} \psi_{\mathbf i,\mathbf k}(T)\psi_{\mathbf
  k,\mathbf j}(S)$ for all $\mathbf i,\mathbf j\in I(n,m)$.

\begin{defn}
  Let  the following oriented tangles be given where the orientation
  of the vertical strands is omitted and has to be chosen
  suitably. The $\rho$ indicates that the horizontal edges and
  crossings affect the $\rho$-th and $\rho+1$-st node.  
  \[
  \begin{array}{lll}
    E_\rho^{\rightleftarrows}=\raisebox{-.1cm}{\epsfbox{si.1}},
    &\quad E_\rho^{\rightrightarrows}=\raisebox{-.1cm}{\epsfbox{si.2}},
    &\quad E_\rho^{\leftleftarrows}=\raisebox{-.1cm}{\epsfbox{si.3}},\\
    E_\rho^{\leftrightarrows}=\raisebox{-.1cm}{\epsfbox{si.4}},
    &\quad
    S_\rho^{\swarrow\!\!\!\!\!\!\searrow}
    =\raisebox{-.1cm}{\epsfbox{si.5}}, 
    &\quad
    S_\rho^{\nearrow\!\!\!\!\!\!\nwarrow}
    =\raisebox{-.1cm}{\epsfbox{si.6}},\\ 
    S_\rho^{\nearrow\!\!\!\!\!\!\searrow}
    =\raisebox{-.1cm}{\epsfbox{si.7}},
    &\quad
    S_\rho^{\swarrow\!\!\!\!\!\!\nwarrow}
    =\raisebox{-.1cm}{\epsfbox{si.8}}, 
    &\quad\mathds{1} =\raisebox{-.1cm}{\epsfbox{si.99}}
  \end{array}
  \]
  We call these oriented tangles 
  \emph{basic oriented tangles} and call the oriented tangles
  $E_\rho^{\rightrightarrows},E_\rho^{\rightleftarrows},
  E_\rho^{\leftrightarrows},E_\rho^{\leftleftarrows}$ of type $e$, and
  the oriented tangles $S_\rho^{\swarrow\!\!\!\!\!\!\searrow},
  S_\rho^{\nearrow\!\!\!\!\!\!\searrow},
  S_\rho^{\swarrow\!\!\!\!\!\!\nwarrow},
  S_\rho^{\nearrow\!\!\!\!\!\!\nwarrow}$  of type $s$.  
\end{defn}
We clearly have
\begin{lem}\label{lem:basictangles}
  $\mathcal{U}_{I,J}$ is generated by the basic oriented tangles, in the sense
  that each element of $\mathcal{U}_{I,J}$ is a linear combination of
  concatenations of basic oriented tangles. 
\end{lem}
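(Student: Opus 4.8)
The plan is to reduce the statement to a topological fact about tangles and then decompose a tangle by cutting it into horizontal layers. By Corollary~\ref{cor:basis}, the $R$-module $\mathcal{U}_{I,J}$ is spanned by oriented tangles $S_c$ of type $(I,J)$ having no closed components, no self-crossings, and in which any two strands cross at most once; and since we only need an equality in $\mathcal{U}_{I,J}$, we may freely rewrite modulo the relations $(O1)$--$(O4)$. So it suffices to show that each such \emph{reduced} tangle is, after a regular isotopy and possibly modulo a linear combination of tangles with strictly fewer crossings, a vertical concatenation of basic tangles; a downward induction on the number of crossings then handles these correction terms.

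Given a reduced tangle $T$, I would put it in generic position with respect to the height function (projection onto the axis joining the top and bottom edges): finitely many crossings and finitely many turnbacks (local extrema of the height along the strands), all at pairwise distinct heights and none on the top or bottom edge. Cutting $T$ by horizontal lines strictly between consecutive critical heights expresses $T$ as a concatenation $T=L_1/\dots/L_k$, where each $L_i$ agrees with $\mathds{1}$ outside a single adjacent pair of positions $\{\rho,\rho+1\}$ and contains there exactly one crossing or one turnback; all intermediate types have $r$ entries $\downarrow$ and $s$ entries $\uparrow$, so these concatenations are defined. A layer with a single crossing is, with its inherited orientations, one of the four basic tangles of type s---after possibly switching the crossing by $(O1)$, which costs only a summand with fewer crossings---and the remaining, featureless layers equal $\mathds{1}$.

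The turnback layers are not of the form $m\to m$ and hence are not basic tangles; combining them in pairs is the heart of the argument. Since $T$ runs from $m$ vertices to $m$ vertices and has no closed components, it has equally many height-maxima (caps) and height-minima (cups). I would argue by induction on the total number of turnbacks: taking, say, the lowest cup, one slides it downward past crossings while sliding a suitable cap upward toward it---introducing further single-crossing layers of type s---until the cup lies directly above the cap over a common adjacent pair $\{\rho,\rho+1\}$, the cap being chosen so that no closed cycle is created (which is possible precisely because $T$ has none). Such a layer is one of the four basic tangles of type e, and deleting this pair strictly decreases the number of turnbacks; the induction terminates at a turnback-free, braid-like tangle, already handled by the previous paragraph. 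This pairing step is the main obstacle: it is the tangle counterpart of the classical fact that the (walled) Brauer algebra is generated by its transposition and contraction generators, and all the care lies in arranging the sliding so that no closed cycle---and hence no use of $(O2)$---is needed.
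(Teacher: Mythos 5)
Your overall strategy is the right one, and in fact it is the \emph{same} strategy the paper uses: the paper simply delegates the topological heart (decomposing a generic diagram into elementary layers) to Theorem~2.11 of \cite{mortwas} and only supplies the two genuinely new observations, namely that orientations can be forgotten and reattached, and that a crossing of the wrong sign is, by $(O1)$, a basic tangle plus a linear combination of tangles with fewer crossings. You instead try to prove the Morton--Wassermann step from scratch, and that is where the gap lies.

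Concretely: your first decomposition $T=L_1/\dots/L_k$ into layers each containing a single crossing or a single turnback is \emph{not} a concatenation inside the spaces $\mathcal{U}_{I',J'}$ with $m$ top and $m$ bottom vertices --- a layer containing exactly one cup (local minimum) has two more boundary points on its top edge than on its bottom edge, so your assertion that ``all intermediate types have $r$ entries $\downarrow$ and $s$ entries $\uparrow$'' is false for those layers, and the whole burden of the proof is shifted onto the cup--cap pairing, which you only sketch. The specific recipe you give does not work as stated: to produce an $E_\rho$ layer you need a cup lying \emph{above} a cap, but if the lowest turnback of $T$ is a cup there is no cap below it to ``slide upward toward it'' (counting intersection points with a horizontal sweep line, the count jumps from $m$ to $m+2$ as the line passes upward through that cup, and nothing lies beneath it); indeed there are diagrams, e.g.\ a clasped version of $E_\rho$, in which \emph{every} cup lies below \emph{every} cap, so some preliminary isotopy is unavoidable and must be justified. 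Your worry about closed cycles is also a red herring --- re-slicing a fixed diagram into layers can never create a closed component --- which suggests the induction step you have in mind (what exactly is ``deleted'', and which two sub-tangles the inductive hypothesis is applied to) is not yet pinned down. None of this is unfixable, but as written the ``heart of the argument'' you yourself identify is asserted rather than proved; the paper's citation of \cite{mortwas} is precisely what closes this hole.
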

Lemma~\ref{lem:basictangles} shows that it suffices to show 
Theorem~\ref{thm:linking} for $S$ a basic oriented tangle. 

\begin{lem}\label{lem:mupltiplicative_m=2}
If $m=2$ and $T$ and $S$ are basic oriented tangles such that $T|S$ is defined,
then we have 
 \[\psi_{T|S}=\psi_S\circ \psi_T.\]
\end{lem}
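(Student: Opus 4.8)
The plan is a finite, explicit verification. For $m=2$ the only basic tangles are the identity $\mathds{1}$, the four crossings $S_1^{\bullet}$ of type s, and the four cap--cup tangles $E_1^{\bullet}$ of type e, so the first step is to write down the linear map $\psi_T$ attached to each of them. Using Procedure~\ref{procedure} (well-defined by Lemma~\ref{lem:well-defined}), and rewriting each tangle into descending form where necessary, one finds: $\psi_{\mathds{1}}$ is the identity; each $\psi_{S_1^{\bullet}}$ is the appropriate $q$-braiding, namely the action of a Hecke generator or its inverse on $V^{\otimes 2}$ resp.~${V^*}^{\otimes 2}$ for the pure types, and one of the isomorphisms $\psi$, $\psi'$ of Lemma~\ref{lem:VV*V'V} (after identifying $V^*\cong V'$) for the mixed types; and each $\psi_{E_1^{\bullet}}$ annihilates $v^I_{(i_1,i_2)}$ unless $i_1=i_2=i$, in which case it returns a weighted diagonal vector $\sum_{k=1}^n (\text{power of }q)\,v^J_{(k,k)}$ with the powers dictated by the orientation-sensitive horizontal-strand rules of the procedure. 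The obvious symmetries of the setup --- reversing all orientations (which effectively replaces $q$ by $q^{-1}$) and flipping top-to-bottom --- identify many of the cases below.

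Next I would run through the pairs of basic tangles $(T,S)$ for which $T/S$ is defined, i.e.~for which the bottom type of $T$ equals the top type of $S$. Every such composite has one of four shapes. (i) One of $T,S$ is $\mathds{1}$: nothing to prove. (ii) Both of type s: Reidemeister~II makes $T/S$ regularly isotopic to $\mathds{1}$ when the two crossings are opposite, while a doubled crossing of equal type is rewritten by $(O1)$ as $\mathds{1}$ (or an $E_1^{\bullet}$) plus a $(q^{-1}-q)$-multiple of $S_1^{\bullet}$ (or of $\mathds{1}$). (iii) One of type s, the other of type e: the crossing produces a kink on the cap or cup, which $(O3)$ or $(O4)$ removes at the cost of $q^{\pm n}$, leaving a cap--cup tangle. (iv) Both of type e: the cup of the first and the cap of the second close into one circle, which $(O2)$ removes at the cost of $[n]_q$, leaving a cap--cup tangle. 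In each shape I would, on one side, rewrite $T/S$ with these relations together with regular isotopy into a linear combination of descending size-two tangles and read $\psi_{T/S}$ off the procedure; on the other side, form the matrix product $\psi_S\circ\psi_T$ from step one; equality of the two is exactly the entrywise formula recorded just after Theorem~\ref{thm:linking}.

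I expect the scalar bookkeeping in shapes (iii) and (iv) to be the real work. In shape (iv) the matrix product $\psi_S\circ\psi_T$ carries a factor obtained by summing a power of $q$ over the label of the closed strand, and one must check that this factor is precisely the $[n]_q=\sum_{i=0}^{n-1}q^{2i-n+1}$ that $(O2)$ prescribes --- the orientation-sensitive weights of the procedure are exactly what is needed, and the two orientations of the circle give the same sum via $k\mapsto n+1-k$. In shape (iii) the crossing lands on a single-label strand, and the braiding value together with the $(q^{-1}-q)$-corrections coming from Lemma~\ref{lem:VV*V'V} must telescope to exactly the $q^{\pm n}$ demanded by $(O3)$/$(O4)$, which has to be confirmed for each of the two crossings and the four orientations. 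Shape (ii) is the familiar check that each of the braidings on $V^{\otimes 2}$, $V\otimes V^*$, $V^*\otimes V$, ${V^*}^{\otimes 2}$ satisfies the quadratic relation $(x+q)(x-q^{-1})=0$, so that $\psi$ is compatible with $(O1)$ in size two. Since the list of cases is finite and each reduces to one of these computations, assembling them proves the lemma.
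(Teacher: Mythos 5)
Your proposal is correct and follows essentially the same route as the paper: enumerate the finitely many admissible pairs of basic tangles, grouped exactly as you do (identity, both type e, mixed e/s, both type s), compute the matrix entries of each factor from Procedure~\ref{procedure}, and check that the matrix product agrees entrywise with $\psi$ applied to the relation-reduced concatenation, the key scalar checks being the $[n]_q$ sum for a closed circle, the $q^{\pm n}$ from a kink, and the quadratic relation for a doubled crossing. The paper merely carries out one representative computation per group and leaves the remaining cases as analogous, which is what your finite verification fills in.
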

\begin{proof}
  Consider for example
  
   \[
   T= \raisebox{-.1cm}{\epsfbox{dotcrossings.3}} 
   =  \raisebox{-.1cm}{\epsfbox{dotcrossings.4}}
   +(q-q^{-1}) \raisebox{-.1cm}{\epsfbox{dotcrossings.7}}\;,\quad
   S=\raisebox{-.1cm}{\epsfbox{dotcrossings.8}}
   \]
   Then $T|S=q^n U$ with $U=\raisebox{-.1cm}{\epsfbox{dotcrossings.9}}$. 
   We have
   \[
   \psi_{\mathbf i,\mathbf k}(T)=
   \left\{
     \begin{array}{cl}
       q&\mbox{ if }i_1=i_2= k_1=k_2\\
       1&\mbox{ if }i_1=k_2\neq i_2=k_1\\
       (q-q^{-1})q^{2i_1-2k_1}&\mbox{ if }i_1=i_2>k_1=k_2\\
       0&\mbox{otherwise}
     \end{array}\right.
   \]
   Furthermore,  $\psi_{\mathbf k,\mathbf
     j}(S)=\delta_{k_1,k_2}\delta_{j_1,j_2}$. Thus
   \begin{eqnarray*}
     \sum_{\mathbf k}\psi_{\mathbf i,\mathbf  k}
     (T)     \psi_{\mathbf k,\mathbf  j}(S)
     &=&\sum_{k_1}\psi_{\mathbf i,(k_1,k_1)}
     (T)\delta_{j_1,j_2}\\
     &=& \left(q+(q-q^{-1})\sum_{k_1<i_1}q^{2(i_1-k_1)} \right)
     \delta_{i_1,i_2}\delta_{j_1,j_2}\\
     &=&q^{2i_1-1}
     \delta_{i_1,i_2}\delta_{j_1,j_2}
     =q^n\cdot \psi_{\mathbf i,\mathbf j}(U). 
   \end{eqnarray*}
   The other cases are proved in a similar way. 
\end{proof}

\begin{lem}\label{lem:multiplicative_basic}
  If $T$ and $S$ are basic oriented tangles for the same $\rho$, then
  $\psi_{T|S}=\psi_S\circ\psi_T$. 
\end{lem}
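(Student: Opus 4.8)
The plan is to reduce this to the two-strand case settled in Lemma~\ref{lem:mupltiplicative_m=2}, exploiting that a basic tangle for a fixed $\rho$ is, away from the nodes $\rho$ and $\rho+1$, just a row of parallel non-crossing vertical strands. For a basic tangle $T$ for $\rho$, let $T_{\mathrm{loc}}$ denote the two-strand basic tangle obtained by keeping only the strands at nodes $\rho,\rho+1$ and deleting the rest. The key technical point is the \emph{localization formula}, valid for all $\mathbf i,\mathbf j\in I(n,m)$:
\[
\psi_{\mathbf i,\mathbf j}(T)=\Bigl(\prod_{k\neq\rho,\rho+1}\delta_{i_k,j_k}\Bigr)\,
\psi_{(i_\rho,i_{\rho+1}),\,(j_\rho,j_{\rho+1})}(T_{\mathrm{loc}}).
\]
To prove it, note that the $m-2$ outer vertical strands can be drawn crossing nothing, so by Lemma~\ref{lem:well-defined} one may evaluate $\psi_{\mathbf i,\mathbf j}(T)$ using any total ordering of the starting vertices that is compatible with $(\mathbf i,\mathbf j)$ and leaves the ordering inside the local disc free; the outer strand at node $k$ then contributes $0$ unless $i_k=j_k$ and $1$ otherwise, while every crossing- and horizontal-strand factor of Procedure~\ref{procedure} arises inside the local disc and depends only on $i_\rho,i_{\rho+1},j_\rho,j_{\rho+1}$. (If the basic tangle as drawn is not descending for such an ordering, one first rewrites it via $(O1)$ as a linear combination of descending basic tangles for $\rho$, to which the same argument applies.)

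Now let $T,S$ be basic tangles for the same $\rho$ with $T/S$ defined. Then $T/S$ has its $m-2$ outer strands vertical and non-crossing, and its local part is exactly $T_{\mathrm{loc}}/S_{\mathrm{loc}}$; moreover every use of $(O1)$--$(O4)$ needed to rewrite $T/S$ as a linear combination of basic tangles --- including the removal of any closed cycle via $(O2)$ --- takes place inside the local disc. So if $T_{\mathrm{loc}}/S_{\mathrm{loc}}=\sum_c\lambda_cC_c$ is its expansion into two-strand basic tangles, then $T/S=\sum_c\lambda_c\widetilde C_c$, where $\widetilde C_c$ is the basic tangle for $\rho$ with local part $C_c$, and the localization formula together with $R$-linearity of $\psi$ gives
\[
\psi_{\mathbf i,\mathbf j}(T/S)=\Bigl(\prod_{k\neq\rho,\rho+1}\delta_{i_k,j_k}\Bigr)\,
\psi_{(i_\rho,i_{\rho+1}),\,(j_\rho,j_{\rho+1})}(T_{\mathrm{loc}}/S_{\mathrm{loc}}).
\]
On the other hand, applying the localization formula to both factors in $\sum_{\mathbf k\in I(n,m)}\psi_{\mathbf i,\mathbf k}(T)\,\psi_{\mathbf k,\mathbf j}(S)$, the factor $\prod_{k\neq\rho,\rho+1}\delta_{i_k,k_k}\delta_{k_k,j_k}$ forces $k_k=i_k=j_k$ for every $k\neq\rho,\rho+1$ and vanishes unless $i_k=j_k$ there, so the sum collapses to $\bigl(\prod_{k\neq\rho,\rho+1}\delta_{i_k,j_k}\bigr)\sum_{k_\rho,k_{\rho+1}=1}^{n}\psi_{(i_\rho,i_{\rho+1}),(k_\rho,k_{\rho+1})}(T_{\mathrm{loc}})\,\psi_{(k_\rho,k_{\rho+1}),(j_\rho,j_{\rho+1})}(S_{\mathrm{loc}})$.

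By Lemma~\ref{lem:mupltiplicative_m=2} the inner sum equals $\psi_{(i_\rho,i_{\rho+1}),(j_\rho,j_{\rho+1})}(T_{\mathrm{loc}}/S_{\mathrm{loc}})$, so the two expressions above agree for all $\mathbf i,\mathbf j\in I(n,m)$; that is, $\psi_{\mathbf i,\mathbf j}(T/S)=\sum_{\mathbf k}\psi_{\mathbf i,\mathbf k}(T)\psi_{\mathbf k,\mathbf j}(S)$, which by the remark after Theorem~\ref{thm:linking} is precisely the assertion $\psi_{T/S}=\psi_S\circ\psi_T$. The only step that is not routine bookkeeping is the localization formula of the first paragraph --- in particular, that the total-ordering choice in Procedure~\ref{procedure} can be arranged so as not to couple the local disc to the outer strands --- and this is exactly what Lemma~\ref{lem:well-defined} supplies; everything else is a direct appeal to the two-strand case.
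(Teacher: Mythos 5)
Your proof is correct and follows essentially the same route as the paper: the paper likewise introduces the two-strand "local" tangles $T'$, $S'$, asserts the factorization $\psi_{\mathbf i,\mathbf k}(T)=\epsilon_{\mathbf i,\mathbf k}\psi_{(i_\rho,i_{\rho+1}),(k_\rho,k_{\rho+1})}(T')$ with $\epsilon$ the product of Kronecker deltas over the outer nodes, collapses the sum over $\mathbf k$, and invokes Lemma~\ref{lem:mupltiplicative_m=2}. The only difference is that you spell out the justification of the localization formula (via Lemma~\ref{lem:well-defined} and the fact that all relations needed to expand $T/S$ live in the local disc), which the paper leaves implicit.
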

\begin{proof}
  Let $T'$ and $S'$ be the basic oriented tangles for $m=2$ such that $T$ and
  $S$ are obtained from $T'$ and $S'$ by adding the same vertical
  strands to the left and right of the oriented tangles.
  If $\mathbf i,\mathbf j\in I(n,m)$, let $\epsilon_{\mathbf i,\mathbf
  j}= \delta_{i_1,j_1}\ldots     \delta_{i_{\rho-1},j_{\rho-1}}
    \delta_{i_{\rho+2},j_{\rho+2}}\ldots
    \delta_{i_m,j_m}$ be the product of the given Kronecker deltas. 
 Then 
  \[
  \psi_{\mathbf i,\mathbf k}(T)=\epsilon_{\mathbf i,\mathbf k}
  \psi_{(i_\rho,i_{\rho+1}),(k_\rho,k_{\rho+1})}(T')
  \mbox{ and }
  \psi_{\mathbf k,\mathbf j}(S)=\epsilon_{\mathbf k,\mathbf j}
  \psi_{(k_\rho,k_{\rho+1}),(j_\rho,j_{\rho+1})}(S')
  \]
  and Lemma~\ref{lem:mupltiplicative_m=2} shows that
  \[
    \sum_{\mathbf k}\psi_{\mathbf i,\mathbf k}(T)
    \psi_{\mathbf k,\mathbf j}(S)=
    \epsilon_{\mathbf i,\mathbf j}
    \psi_{(i_\rho,i_{\rho+1}),(j_\rho,j_{\rho+1})}(T'|S')
    =\psi_{\mathbf i,\mathbf j}(T|S). 
  \]
 \end{proof}
Note that for $S=\mathds{1}$, Theorem~\ref{thm:linking} is obviously
true.  Before proving the rest of the theorem, we will show several
lemmas dealing with subcases of the theorem.  The setting will always
be the following:
\begin{itemize}
\item
  $S$ is a basic oriented tangle of type $e$ or $s$, such that
  the $\rho$-th and the $\rho+1$-st vertices are affected by the
  horizontal strands or the crossing. 
  $T$ is an oriented tangle such that $T|S$ is
  defined and such that properties
  \ref{descending_1}.-\ref{descending_3}.~in
  Definition~\ref{definition:descending} hold. 
\item
  $S'$ is the oriented tangle for $m=2$ defined as in the proof of
  Lemma~\ref{lem:multiplicative_basic}. 
\item  $\epsilon_{\mathbf i,\mathbf
    j}= \delta_{i_1,j_1}\ldots     \delta_{i_{\rho-1},j_{\rho-1}}
  \delta_{i_{\rho+2},j_{\rho+2}}\ldots
  \delta_{i_m,j_m}$ is defined as in  Lemma~\ref{lem:multiplicative_basic}.

\item If $\mathbf i$ is a multi index and
  $k,l\in\{1,\ldots,n\}$, we write $\mathbf i_{k,l}$ for the multi index
  obtained from $\mathbf i$ by replacing the $\rho$-th entry by  $k$ and
  the $\rho+1$-st entry by $l$. 
\end{itemize}

\begin{lem}\label{lem:multiplicative_strand}
  Suppose 
  there is a strand in $T$ connecting the vertices $b_\rho$ and
  $b_{\rho+1}$. Then we have $\psi_{T|S}=\psi_S\circ\psi_{T}$. 
\end{lem}
\begin{proof}
  Let $\mathbf i,\mathbf j\in I(n,m)$. 
  Without loss of generality we may assume that $T$ is descending with
  respect to $(\mathbf i,\mathbf j)$. Note that then, $T$ is
  descending with respect to $(\mathbf i,\mathbf j_{k,l})$ for any
  $k,l\in\{1,\ldots,n\}$.
  Let $T'$ be
  the oriented tangle of type $e$ with two vertices in the
  bottom and top row, the same horizontal strand as $T$ and an additional
  top horizontal strand, see Figure~\ref{fig:strand}.  The
  calculations to write $T|S$ as a linear combination of 
  descending oriented tangles  are the same as in
  $T'|S'$. 
  Now
  the lemma follows from the result for $m=2$ in
  Lemma~\ref{lem:mupltiplicative_m=2}. 

  \begin{figure}[h]
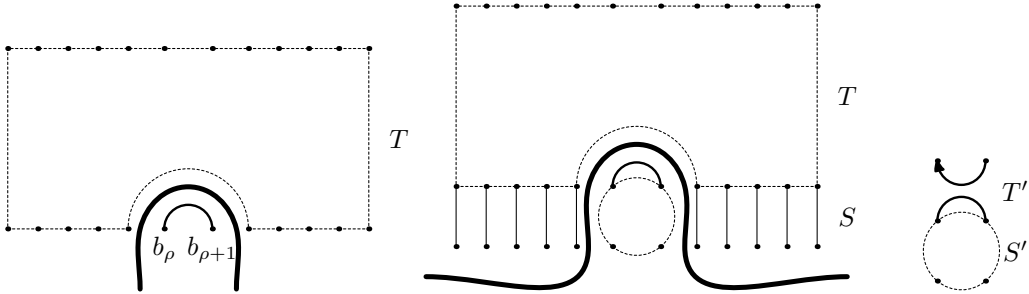

    \centerline{\epsfbox{concat.3}
      \epsfbox{concat.1}\qquad\epsfbox{concat.2}}
    \caption{$T$ has a strand connecting 
      $b_\rho$ and $b_{\rho+1}$}\label{fig:strand}
  \end{figure}
\end{proof}

\begin{lem}\label{lem:multiplicative_noncrossing}
  Fix $\mathbf i,\mathbf j\in I(n,m)$.  Suppose $T$ is an oriented
  tangle such that $b_{\rho}$ and $b_{\rho+1}$ are not connected by a
  strand and the strands ending or starting in $b_{\rho}$ and
  $b_{\rho+1}$ do not cross. Then there is an oriented tangle
  $\tilde{T}$ such that $T-\tilde{T}$ is a linear combination of
  oriented tangles with fewer crossings than $T$ and $\psi_{\mathbf
    i,\mathbf j}(\tilde{T}|S)= \sum_{\mathbf k\in I(n,m)}
  \psi_{\mathbf i,\mathbf k}(\tilde{T})\psi_{\mathbf k,\mathbf j}(S)$.
\end{lem}
\begin{proof}
  If $T$ and $\tilde{T}$ only differ in the orientation of certain
  crossings, then $T-\tilde{T}$ is a linear combination of oriented
  tangles with fewer crossings than $T$. Hence it suffices to show if
  $T$ is descending with respect to some total ordering of the
  starting vertices then the formula in the lemma holds for $T$.

  Assume first, that $S$ is of type $e$. By changing the orientation
  of crossings, we may assume
  the  following: 
  Let $s_1$ be the strand in $T$ ending in $b_{\rho}$ or
  $b_{\rho+1}$,  $s_2$  the strand in $T$ starting at the
  other vertex and $k'$   the label of the starting vertex of $s_1$ 
  when the vertices of $T$ are labeled  by $(\mathbf i,\mathbf j)$. We
  assume that
  $T$ is descending with respect to 
  $(\mathbf i,\mathbf j_{k',k'})$ such that the
  starting vertices of $s_1$ and $s_2$ are adjacent in this ordering. 

  If $j_{\rho}\neq j_{\rho+1}$ then both $\psi_{\mathbf k,\mathbf
    j}(S)$  and $\psi_{\mathbf i,\mathbf j}(T|S)$ vanish since the
  horizontal strands connecting $b_{\rho}$ and $b_{\rho+1}$ in $S$ and
  in $T|S$ stay fixed when the oriented tangles are
  written as a linear combination of
  descending oriented tangles.  So we may 
  assume
  that $j_{\rho}=j_{\rho+1}=j$. 
  If $k_\rho\neq k_{\rho+1}$ then $ \psi_{\mathbf k,\mathbf j}(S)=0$.
  Suppose $k\neq k'$. To write $T$ as a linear combination of
  descending oriented tangles with respect
  to $(\mathbf i,\mathbf j_{k,k})$, one
  can apply the relations to strands 
  which are all on top of $s_1$ if $k<k'$ or below $s_1$ if
  $k>k'$. Therefore,  all oriented tangles appearing in this linear
  combination have 
  the strand $s_1$ whose vertices are indexed by $k$ and $k'$, thus  
  $\psi_{\mathbf i,\mathbf j_{k,k}}(T)= 0$ and we have
  \begin{eqnarray*}
    \sum_{\mathbf k}
    \psi_{\mathbf i,\mathbf k}(T) \psi_{\mathbf k,\mathbf j}(S)
    &=& \sum_{\mathbf k}
    \psi_{\mathbf i,\mathbf k}(T) \epsilon_{\mathbf k,\mathbf j}
    \psi_{(k_\rho,k_{\rho+1}),(j,j)}(S')\\
    &=&\sum_k \psi_{\mathbf i,\mathbf j_{k,k}}(T) 
    \psi_{(k,k),(j,j)}(S')
    =\psi_{\mathbf i,\mathbf j_{k',k'}}(T) 
    \psi_{(k',k'),(j,j)}(S')
  \end{eqnarray*}
  Note that the choice of total ordering on the starting vertices
  assures that $T|S$ is regularly isotopic to an oriented tangle which
  is descending with respect to $(\mathbf i,\mathbf
  j)$.  One can compute the value $\psi_{\mathbf i,\mathbf j}(T|S) $
  directly from $T|S$, since the number of positive crossings minus
  the number of negative crossings is fixed under the Reidemeister
  Moves. 
  Thus the computation of $\psi_{\mathbf i,\mathbf j_{k',k'}}(T) $ differs
  from that of $\psi_{\mathbf i,\mathbf j}(T|S) $ only in the values
  for the strands $s_1$ and $s_2$ in $T$ and the merged strand and the
  bottom horizontal strand  of the $S$-part in $T|S$. Clearly,  
  $\psi_{\mathbf i,\mathbf j_{k',k'}}(T) 
  \psi_{(k',k'),(j,j)}(S')\neq 0$ if and only  if $\psi_{\mathbf
    i,\mathbf j}(T|S) \neq 0$.

 The strands in $T|S$ and in $S'$
  connecting the vertices $b_{\rho}$ and
  $b_{\rho+1}$ yield the same value. Suppose now that the upper
  horizontal strand in $S$ goes from left to right and look at the
  strands $s_1$ and $s_2$ in $T$ and the merged strand in $T|S$, see
  Figure~\ref{fig:horizontalstrands}. 
  \begin{figure}[h]
    \centerline{
      \epsfbox{horizontal.1}\hfil
      \epsfbox{horizontal.2}\hfil
      \epsfbox{horizontal.3}\hfil
      \raisebox{.75mm}{\epsfbox{horizontal.4}}\hfil
      \epsfbox{horizontal.5}\hfil
      \epsfbox{horizontal.6}
    }\caption{}\label{fig:horizontalstrands}
  \end{figure}
  In either case merging the strands $s_1$ and $s_2$ eliminates one
  horizontal strand from left to right on the bottom row or produces an
  additional horizontal  strand from left to right on the upper
  row. Thus we have 
  \[\psi_{\mathbf i,\mathbf j_{k',k'}}(T) 
  \psi_{(k',k'),(j,j)}(S')=\psi_{\mathbf i,\mathbf j}(T|S),\]
  which shows the claim. If the upper  horizontal strand in $S$ goes
  from right  to left, then the number of vertical stands from left to
  right is fixed when merging the strands, and the lemma is also true.

  Now, suppose that $S$ is a basic tangle of type $s$. 
  Let $\mathbf j'$ be the multi index obtained from $\mathbf j$ by
  interchanging the $\rho$-th and the $\rho+1$-st entries. We may
  assume that $T$ is descending with respect to 
  $(\mathbf i,\mathbf j')$. Let $\hat{S}$ be the tangle obtained from
  $S$ by changing the orientation of  the crossing.

  If $S=S_\rho^{\swarrow\!\!\!\!\!\!\searrow}$, let the vertices of
  $T$ be labeled  by $(\mathbf 
  i,\mathbf j)$.
  Let $i$ be the label of the starting vertex 
  of the strand of $T$
  ending in $b_\rho$, and $j$ be the label of the strand
  ending in $b_{\rho+1}$.
  If $i<j$ then $T|S$ is descending with respect to 
  $(\mathbf i,\mathbf j)$ and we have $\psi_{\mathbf i,\mathbf j}(T|S)= 
  \psi_{\mathbf i,\mathbf j'} (T)$. 
  \[
  T|S_\rho^{\swarrow\!\!\!\!\!\!\searrow}=\;
  \raisebox{-1.25cm}{\epsfbox{composition.3}} 
  \]    
  Furthermore, 
  $\psi_{\mathbf k,\mathbf j}(S)\neq 0$ implies 
  $\mathbf k=\mathbf j'$ or $\mathbf k=\mathbf j$. 
  $\psi_{\mathbf j,\mathbf j}(S)\neq 0$ implies $j_\rho>j_{\rho+1}$,
  thus we can not have $i=j_\rho$ and $j=j_{\rho+1}$, so
  $\psi_{\mathbf i,\mathbf j}(T)\psi_{\mathbf j,\mathbf j}(S)=0$. We have
  \[
  \sum_{\mathbf k}\psi_{\mathbf i,\mathbf k}(T)
  \psi_{\mathbf k,\mathbf j}(S)=
  \psi_{\mathbf i,\mathbf j'}(T)
  \psi_{\mathbf j',\mathbf j}(S)
  =\psi_{\mathbf i,\mathbf j'}(T)
  =\psi_{\mathbf i,\mathbf j}(T|S).
  \]
  If $i>j$, then $T|S=T|\hat{S}+(q^{-1}-q)T$ is a linear combination
  of descending tangles with respect to $(\mathbf i,\mathbf j)$
  and it suffices to show $ \sum_{\mathbf k}\psi_{\mathbf i,\mathbf k}(T)
  \psi_{\mathbf k,\mathbf j}(\hat{S})=\psi_{\mathbf i,\mathbf
    j}(T|\hat{S})$. The proof can be copied from that for $i<j$ by
  reflecting all tangles at a vertical line.

  If $i=j$ then $T|S$ or $T|\hat{S}$ is descending with respect to
  $(\mathbf i,\mathbf j)$,  we assume that $T|S$ is descending. Then  we have 
  $\psi_{\mathbf i,\mathbf j} (T|S)
  =q^{-1}\psi_{\mathbf i,\mathbf j}(T)$. Furthermore,
  $\psi_{\mathbf k,\mathbf j} (S)\neq 0$ only if $\mathbf k=\mathbf j$
  or $\mathbf k=\mathbf j'$ and
  $\psi_{\mathbf
    i,\mathbf j_{k,l}} (T)\neq 0$ only if $k=l$.
  If $j_{\rho}\neq j_{\rho+1}$ we have
  $\sum_{\mathbf k}\psi_{\mathbf i,\mathbf k}(T)
  \psi_{\mathbf k,\mathbf j}(S)=0=\psi_{\mathbf i,\mathbf j}(T|S)$, if
  $j_{\rho}=j_{\rho+1}$, then $\mathbf j=\mathbf j'$ and we have
  \[
  \sum_{\mathbf k}\psi_{\mathbf i,\mathbf k}(T)
  \psi_{\mathbf k,\mathbf j}(S)=
  \psi_{\mathbf i,\mathbf j}(T)
  \psi_{\mathbf j,\mathbf j}(S)=q^{-1} \psi_{\mathbf i,\mathbf j}(T)
  =\psi_{\mathbf i,\mathbf j} (T|S).
  \]
All other cases can be shown using similar arguments. 
\end{proof}
We are now able to prove Theorem~\ref{thm:linking}:

\begin{proof}[Proof of Theorem~{\rm\ref{thm:linking}}]

We will proceed by induction on the number of crossings in
$T$. Suppose there are no crossings in $T$ then $b_{\rho}$ and
$b_{\rho+1}$ are either connected by a strand, or the two strands
starting or ending in $b_{\rho}$ and
$b_{\rho+1}$ do not cross. Thus we can apply
Lemma~\ref{lem:multiplicative_strand} or 
Lemma~\ref{lem:multiplicative_noncrossing}.

Suppose now, that $T$ contains at least one crossing.
Clearly, we can assume that $T$ is descending with respect to some
ordering, thus  two strands cross at most once. If the strands
starting or ending in $b_{\rho}$ and
$b_{\rho+1}$ coincide or do not cross, we can apply
Lemma~\ref{lem:multiplicative_strand} or
Lemma~\ref{lem:multiplicative_noncrossing} to write $T=\tilde{T}+R$
where $\tilde{T}$ satisfies the theorem and $R$ is a linear
combination of oriented tangles with fewer crossings than $T$, and thus
satisfies the theorem  as well by the induction hypothesis.

 So let $T$ be an oriented tangle,
such that these two strands cross. Since two strands never cross more
than once, the Reidemeister Moves III and isotopies fixing the
vertices can be used to move the crossing to the bottom, that is $T$ can
be written as a concatenation $T=\hat{T}|\hat{S}$ where $\hat{S}$ is a basic
oriented tangle with a crossing (or a  crossing with changed orientation)
affecting the
$\rho$-th and $\rho+1$-st vertices and $\hat{T}$
is an oriented tangle with strictly
fewer crossings than $T$. Thus we may apply the induction hypothesis to
$\hat{T}$. Note that $\hat{S}|S$ is 
a linear combination of basic oriented tangles. 
By Lemma~\ref{lem:multiplicative_basic} we get
\[
\psi_{T|S}=
\psi_{\hat{T}|\hat{S}|S}
= \psi_{\hat{S}|S}\circ \psi_{\hat{T}}
=\psi_{S}\circ\psi_{\hat{S}}\circ \psi_{\hat{T}}
=\psi_{S}\circ \psi_{\hat{T}|\hat{S}}
=\psi_{S}\circ \psi_{T}
\]and the
proof is complete.
\end{proof}

The theorem shows that $V_I$ is a $\mathcal{U}_{I,I}$-right module. In
particular, the quantized walled Brauer algebra $\mathfrak{B}_{r,s}^n(q)$
acts on mixed tensor space $V^{\otimes r}\otimes
{V^*}^{\otimes s}$. 

Recall that $\mathfrak{B}_{m,0}^n(q)\cong
\mathcal{H}_m$ and $\mathfrak{B}_{r,s}^n(q)\cong
\mathcal{A}_{r,s}(R)$.
The Hecke algebra $\mathcal{H}_m$ acts on  ordinary tensor
space: 
Let the symmetric group $\mathfrak{S}_m$ act on $I(n,m)$ by place
permutation, i.~e. $\mathbf
i.s_k=(i_1,\ldots,i_{k-1},i_{k+1},i_k,i_{k+2},\ldots,i_m)$ for a
Coxeter generator $s_k$. Then the Hecke algebra $\mathcal{H}_m$ acts on
$V^{\otimes m}$ by
\[
v_{\mathbf i}T_k=\begin{cases}
  q^{-1}v_{\mathbf i}&\text{ if }i_k=i_{k+1}\\
  v_{\mathbf i.s_k}&\text{ if }i_k<i_{k+1}\\
  v_{\mathbf i.s_k}+(q^{-1}-q)v_{\mathbf i}&\text{ if }i_k>i_{k+1}
\end{cases}
\]
Similarly, there's an action of $\mathcal{A}_{r,s}(R)$ on mixed
tensor space given in \cite{leduc}. By checking the action of
generators of these algebras we obtain
\begin{prop}\label{prop:Hecke}
  \begin{enumerate}
  \item\label{item:Hecke}
    The action of $\mathfrak{B}_{m,0}^n(q)$ on ordinary tensor space 
    coincides
    with the action of the Hecke algebra $\mathcal{H}_m$.
  \item
    The action of $\mathfrak{B}_{r,s}^n(q)$ on mixed tensor space
    coincides with the
    action of  $\mathcal{A}_{r,s}(R)$ in
    \cite{leduc} ($q$ replaced by $q^{-1}$).
   \end{enumerate}
\end{prop}

\section{The Hopf algebra  $\mathbf U_R$}\label{section:hopf}
In this section, we
introduce the quantized enveloping algebra of the general linear Lie
algebra $\mathfrak{gl}_n$ over $R$ with parameter $q$ and summarize
some well known results, see for example \cite{hongkang,jantzen,lusztig}.

Let $P^\vee$ be the free $\mathbb{Z}$-module with basis $h_1, \ldots,
h_n$ and let $\varepsilon_1, \ldots, \varepsilon_n \in {P^\vee}^*$ be
the corresponding dual basis: $\varepsilon_i$ is given by
$\varepsilon_i(h_j):=\delta_{i,j}$ for $j=1, \ldots, n$, where
$\delta$ is the usual Kronecker symbol.  For $i=1,\ldots,n-1$ let
$\alpha_i\in {P^\vee}^*$ be defined by $\alpha_i := \varepsilon_i -
\varepsilon_{i+1}$. 
\begin{defn}
  The quantum general linear algebra $U_q(\mathfrak{gl}_n)$ is the
  associative $\mathbb{Q}(q)$-algebra with $1$ 
  generated by the elements
  $e_i, f_i$ $(i=1, \ldots, n-1)$ and $q^{h}$ $(h\in P^\vee)$
  with the defining relations
  \begin{eqnarray*}
    &&q^0=1,\quad q^hq^{h'}=q^{h+h'}\\
    &&q^he_iq^{-h}=q^{\alpha_i(h)}e_i,\quad 
    q^hf_iq^{-h}=q^{-\alpha_i(h)}f_i,\\
    &&e_if_j-f_je_i=\delta_{i,j}\frac{K_i-K_i^{-1}}{q-q^{-1}},\quad
    \mbox{ where }K_i := q^{h_{i}-h_{i+1}},\\
    &&e_i^2e_j-(q+q^{-1})e_ie_je_i+e_je_i^2=0\quad\mbox{ for }|i-j|=1,\\
    &&f_i^2f_j-(q+q^{-1})f_if_jf_i+f_jf_i^2=0\quad\mbox{ for }|i-j|=1,\\
    &&e_ie_j=e_je_i,\quad f_if_j=f_jf_i\quad\mbox{ for }|i-j|>1.
  \end{eqnarray*}
\end{defn}
$U_q(\mathfrak{gl}_n)$ is a Hopf algebra with comultiplication
$\Delta$, counit $\varepsilon$ the unique algebra homomorphisms and
antipode $S$ the unique invertible algebra anti-homomorphism defined on
generators by
\begin{eqnarray*}
  &&\Delta(q^h)=q^h\otimes q^h,\\
  &&\Delta(e_i)=e_i\otimes K_i^{-1}+1\otimes e_i,\quad
  \Delta(f_i)=f_i\otimes 1+K_i\otimes f_i,\\
  &&\varepsilon(q^h)=1,\quad \varepsilon(e_i)=\varepsilon(f_i)=0,\\
  && S(q^h)=q^{-h},\quad S(e_i)=-e_iK_i,\quad S(f_i)=-K_i^{-1}f_i .
\end{eqnarray*}
$U_q(\mathfrak{gl}_n)$ acts on our free $R$-module $V$ of rank $n$
(here we have $R=\mathbb{Q}(q)$) by
\begin{eqnarray*} 
  q^h v_j&=& 
  q^{\varepsilon_j(h)}v_j \mbox{ for }h\in P^\vee,\,j=1,\ldots,n,\\
  e_iv_j&=&\left\{
    \begin{array}{ll}
      v_i&\mbox{ if }j=i+1\\
      0&\mbox{ otherwise,}
    \end{array}
  \right. \hspace{3cm}
  f_iv_j=\left\{
    \begin{array}{ll}
      v_{i+1}&\mbox{ if }j=i\\
      0&\mbox{ otherwise.}
    \end{array}
  \right.
\end{eqnarray*}
We call $V$ the \emph{vector representation} of $U_q(\mathfrak{gl}_n)$.

Let $[l]_q!:=[l]_q[l-1]_q\ldots[1]_q$
and set 
$e_i^{(l)}:=\frac{e_i^l}{[l]_q!}$, $f_i^{(l)}:=\frac{f_i^l}{[l]_q!}$.
Let $\mathbf U$ be the $\mathbb{Z}[q,q^{-1}]$ subalgebra of  
$U_q(\mathfrak{gl}_n)$ generated by the $q^h$ and the divided powers 
$e_i^{(l)}$ and $f_i^{(l)}$ for $l\geq 0$. Then $\mathbf U$
is a Hopf algebra and we have
\begin{eqnarray*} 
  \Delta(e_i^{(l)})&=&
  \sum_{k=0}^l q^{k(l-k)}e_i^{(l-k)}\otimes K_i^{k-l} e_i^{(k)}\\
  \Delta(f_i^{(l)})&=&
  \sum_{k=0}^l q^{-k(l-k)}f_i^{(l-k)}K_i^k\otimes f_i^{(k)}\\
  S(e_i^{(l)})&=& (-1)^lq^{l(l-1)}e_i^{(l)}K_i^l\\
  S(f_i^{(l)})&=& (-1)^lq^{-l(l-1)}K_i^{-l}f_i^{(l)}\\
  \varepsilon(e_i^{(l)})&=&\varepsilon(f_i^{(l)})=0.
\end{eqnarray*}
Furthermore, the $\mathbb{Z}[q,q^{-1}]$-lattice
$V_{\mathbb{Z}[q,q^{-1}]}$ in $V$ generated by the $v_i$ is invariant
under the action of $\mathbf U$.  Recall that $R$ is a commutative
ring with $1$ and $q\in R$ invertible. Then $R$ is a
$\mathbb{Z}[q,q^{-1}]$-module via specializing $q\in
\mathbb{Z}[q,q^{-1}]\mapsto q\in R$.  Let $\mathbf
U_R:=R\otimes_{\mathbb{Z}[q,q^{-1}]}\mathbf U$. Then $\mathbf U_R$
inherits a Hopf algebra structure from $\mathbf U$ and
$V_R:=R\otimes_{\mathbb{Z}[q,q^{-1}]}V_{\mathbb{Z}[q,q^{-1}]}$ is a
$\mathbf U_R$-module.  If no ambiguity arises, we shall henceforth
write $V$ instead of $V_R$ and $\mathbf U$ instead of $\mathbf U_R$.

If $W_1$ and $W_2$ are $\mathbf U$-modules, one can turn $W_1\otimes
W_2$ into a $\mathbf U$-module by setting $x(w_1\otimes
w_2)=\Delta(x)(w_1\otimes w_2)$ for $x\in\mathbf U$ and $w_i\in W_i$.
Thus ordinary tensor space becomes a $\mathbf U$-module.  Let $\rho$
be the representation of $\mathbf U$ on ordinary tensor space.  One
easily checks that the action of the Hecke algebra and thus of
$\mathfrak{B}_{m,0}^n(q)$ on ordinary tensor space commutes with the
action of $\mathbf{U}$.

Green showed in \cite{green}, that $\rho(\mathbf{U})$ is the $q$-Schur
algebra defined in \cite{dipperdonkin,dipperjamesschur}.  Let
$\sigma:\mathcal{H}_{m}\to \mathrm{End}_R (V^{\otimes m})$ be the
representation of the Hecke algebra. The next theorem not only shows
that $\mathrm{End}_{{\mathbf U}} (V^{\otimes m})$ is an epimorphic
image of the Hecke algebra $\mathcal{H}_m$ (this was proved in full
generality in \cite[Theorem~6.2]{duparshallscott}), but also that some
kind of converse is true (\cite[Theorem~3.4]{dipperjames}). In the
literature, this property is called Schur--Weyl duality.

\begin{thm}[Schur--Weyl duality for ordinary 
  tensor space \cite{duparshallscott,dipperjames}]\label{thm:schurweyl}
  \begin{eqnarray*}
    \mathrm{End}_{{\mathbf U}}
    (V^{\otimes m})&=&\sigma(\mathcal{H}_{m}) \\
    \mathrm{End}_{\mathcal{H}_{m}}(V^{\otimes m})&=&
    \rho({\mathbf U}) . 
  \end{eqnarray*}
\end{thm}
One can turn $V^*$, and thus mixed tensor space, into a ${\mathbf
  U}$-module.  Our goal is to generalize part of the above theorem by
proving the first half of Schur--Weyl duality for mixed tensor space,
where the Hecke algebra is replaced by the `quantized walled Brauer
algebra' given in Definition \ref{defn:qwalledbrauer}.

\section{Dual $\mathbf U$-modules}
\label{section:finitedual}
Let $W$ be an $R$-free ${\mathbf U}$-module of finite rank.  The
dual module $W^*=\mathrm{Hom}_R(W,R)$ carries a ${\mathbf U}$-module
structure via $(xg)(v)=g(S(x)v)$ for $x\in {\mathbf U},g\in W^*,
v\in W$. Thus mixed tensor space $V^{\otimes r}\otimes {V^*}^{\otimes
  s}$ becomes  a ${\mathbf U}$-module.

Also, there is a different $\mathbf U$-action on the dual module
$\mathrm{Hom}_R(W,R)$ of $W$ given by $(xg)(v)=g(S^{-1}(x)v)$. This
$\mathbf U$-module will be denoted by $W'$.  Since $S^2$ is an inner
automorphism of $\mathbf U$, $W^*$ and $W'$
are isomorphic as $\mathbf U$-modules,
even in a natural way, but the
isomorphism is not the identity on the dual space.  Similarly, $W$ and
its double dual $W^{**}$ are naturally isomorphic as $\mathbf
U$-modules, but the isomorphism is not the usual isomorphism between
an $R$-module and its double dual (see \cite{jantzen}).

\begin{lem}[\cite{hongkang}, Lemma~3.5.1]\label{lem:doubledual}
The usual isomorphism between an $R$-free module $W$ of finite rank 
and its double dual induces  $\mathbf U$-module isomorphisms
\[
W'^*\cong W\cong {W^*}'.
\]
\end{lem}

The $R$-linear map $W'\otimes W\to \mathrm{End}_R(W)$ mapping 
$g\otimes v$ with $g\in W',v\in W$ to the $R$-endomorphism
of $W$ given by $(g\otimes v)(w)=g(w)\cdot v$
for $w\in W$ is an isomorphism.
Let the set of  \emph{${\mathbf U}$-invariants} 
of $W$ be given by
\[
W^{{\mathbf U}}:= 
\{w \in W: xw=\varepsilon(x)w
\mbox{ for all }x\in {\mathbf U} \}.
\]

The following lemma provides a connection
between ${\mathbf  U}$-invariants and ${\mathbf U}$-endomorphisms.

\begin{lem}\label{lem:V'otimesV}
  \[
  \mathrm{End}_{{\mathbf U}}(W)
  \cong(W'\otimes W)^{{\mathbf U}}
  \]
  under the above isomorphism.
\end{lem}
\begin{proof}
  In \cite{jantzen} it is shown that the natural isomorphism $
  \mathrm{End}_R(W)\cong W\otimes W^*$ induces an isomorphism
  between $\mathrm{End}_{{\mathbf  U}}(W)$ and $(W\otimes W^*)^{{\mathbf
      U}}$ if $R=\mathbb{Q}(q)$. One has to be more 
  careful when the ground ring $R$ is not
  $\mathbb{Q}(q)$, for example if $[n]_q=0$ for $n\geq 2$, then
  $e_i=f_i=0$ and ${\mathbf U}_R$ is not generated by the $e_i$, $f_i$
  and the $q^h$. In general, a result in 
 \cite{kemper} on Hopf algebras shows that there is an isomorphism  between
 $\mathrm{End}_{{\mathbf  U}}(W)$ and $(W\otimes W^*)^{{\mathbf
     U}}$. The claim of the lemma follows if one replaces $S$ by
 $S^{-1}$ and $\Delta$ by $\tau\circ\Delta$ in the proof in
 \cite{kemper} where $\tau$ is the algebra homomorphism $\tau:\mathbf
 U\otimes \mathbf U\to  \mathbf U\otimes \mathbf U: u_1\otimes u_2\mapsto
 u_2\otimes u_1$. 
\end{proof}
\begin{rem}
  The question may arise why we introduce a second kind of dual
  module $W'$. One could also show that  $W'$ and $W^*$ are isomorphic
  to obtain an isomorphism 
  $\mathrm{End}_{{\mathbf  U}}(W)\cong (W^*\otimes W)^{{\mathbf
      U}}$ or
  use the isomorphism
  $\mathrm{End}_{{\mathbf  U}}(W)\cong (W\otimes W^*)^{{\mathbf
      U}}$.
  The reason for  working  with invariants of $W'\otimes W$ is
  the following: 
  the isomorphism  $W'\cong W^*$ is not the  identity on linear
  forms, thus  $\mathrm{End}_{{\mathbf  U}}(W)\cong (W^*\otimes W)^{{\mathbf
      U}}$ is not the natural isomorphism.  If one chooses a basis for
  $W$, then one gets in a natural way  bases for dual modules, tensor
  products and endomorphim rings thereof. The isomorphism
  $\mathrm{End}_{{\mathbf  U}}(W)\cong (W'\otimes W)^{{\mathbf U}}$
  maps basis elements to basis elements while the isomorphism
  $\mathrm{End}_{{\mathbf  U}}(W)\cong (W^*\otimes W)^{{\mathbf 
      U}}$ does not. A similar argument shows that
  $\mathrm{End}_{{\mathbf  U}}(W)\cong (W\otimes W^*)^{{\mathbf  U}}$
  is not the right kind of invariants when $W$ is a tensor product of
  $V$'s and $V^*$'s, since one would have to use the isomorphism
  $W^{**}\cong W$ which is not the natural isomorphism between $W$ and
  its double dual. 
  Note that we will use the isomorphism of Lemma~\ref{lem:V'otimesV}
  explicitely and map elements 
  through this isomorphism.
\end{rem}


Next we have a result describing the interaction of duality with
tensor product.

\begin{lem}[\cite{kassel}]\label{lem:(VW)*}
  Let $W_1$ and $W_2$ be  ${\mathbf U}$-modules which are $R$-free 
  of finite rank. Then there are natural isomorphisms of $\mathbf U$-modules
  \[
  (W_1\otimes W_2)^*\cong W_2^*\otimes W_1^*,\qquad
  (W_1\otimes W_2)'\cong W_2'\otimes W_1'.
  \]
  The isomorphisms 
  $\phi^\Box:W_2^\Box\otimes W_1^\Box\rightarrow(W_1\otimes W_2)^\Box $
  (with $\Box=*$ or $'$)
  are given by
  \[
  \phi^\Box(g\otimes h)\;(v\otimes w):=h(v)g(w)
  \]
  for $g\in  W_2^\Box, h\in  W_1^\Box,v\in W_1,w\in W_2$.
\end{lem}

The following lemma exhibits isomorphisms $ V'\otimes V\to V\otimes
V^*$ and $ V^*\otimes V\to V\otimes V^*$. As mentioned in the remarks
preceding Lemma \ref{lem:doubledual} we have an isomorphism $V^*\cong
V'$ as $\mathbf{U}$-modules. We will need the following two
isomorphisms generalizing the interchange of the components of a
tensor product. Recall the basis $\{v_1^*,\ldots,v_n^*\}$ of $V^*$
dual to the basis $\{v_1,\ldots,v_n\}$ of $V$. In order to emphasize
that $v_i^*$ is considered as an element of $V'$, we write $v_i'$
instead of $v_i^*$.
\begin{lem}\label{lem:VV*V'V}
  Let $\psi':V'\otimes V\rightarrow V\otimes V^*$,
  and  $\psi:V^*\otimes V\rightarrow V\otimes V^*$ be the 
  homomorphisms of $R$-modules defined by
  \begin{eqnarray*}
    \psi'(v_i'\otimes v_k)&=&\left\{
      \begin{array}{ll}
        q^{n+1-2i}v_k\otimes v_i^*&k\neq i\\
        q^{n+1-2i}
        \left(q^{-1}v_i\otimes v_i^*+(q^{-1}-q)\sum_{l=1}^{i-1}v_l\otimes
          v_l^*\right)&k=i 
      \end{array}
    \right.\\
    \psi(v_i^*\otimes v_k)&=&\left\{
      \begin{array}{ll}
        v_k\otimes v_i^*&k\neq i\\
        q^{-1}v_i\otimes v_i^*+(q^{-1}-q)\sum_{l=1}^{i-1}v_l\otimes
          v_l^*&k=i 
      \end{array}
    \right.
  \end{eqnarray*}
  Then  $\psi'$ and $\psi$ 
  are  isomorphisms of 
  ${\mathbf U}$-modules.
\end{lem}
\begin{proof}
   Both maps are invertible since $q$ is invertible. Now it can be
  easily checked that they commute with the action of  ${\mathbf U}$.
\end{proof}
 Note that $\psi'$ can be obtained from $\psi$ by using the
  isomorphism $V^*\cong V'$. 

\section{An isomorphism and its tangle version}
\label{section:isomorphism} 

Now we apply the results on duality from the previous section to
obtain the following isomorphisms, which will be needed in the proof
of the main results.

\begin{prop}\label{prop:isomorphisms}
  \begin{enumerate}
  \item
  For $r>0$ there is an isomorphism of $R$-modules
  \[
  \mathrm{End}_R(V^{\otimes r}\otimes {V^*}^{\otimes s})\to
  \mathrm{End}_R( V^{\otimes r-1}\otimes {V^*}^{\otimes s+1})
  \]
  mapping $\mathbf U$-endomorphisms to $\mathbf U$-endomorphisms. 
  \item
    Repeated application of the above isomorphism leads to an
    isomorphism
  \[
  \mathrm{End}_{{\mathbf U}}
  (V^{\otimes r+s})\to
  \mathrm{End}_{{\mathbf U}}
  (V^{\otimes r}\otimes {V^*}^{\otimes s})\]
  of $R$-modules. 
\end{enumerate}
\end{prop}

\begin{proof}
$\mathrm{End}_R(V^{\otimes r}\otimes {V^*}^{\otimes s})$ is isomorphic
as an $R$-module
to  $(V^{\otimes r}\otimes {V^*}^{\otimes s})'\otimes
(V^{\otimes r}\otimes {V^*}^{\otimes s})$, such that $\mathbf
U$-endomorphisms correspond to $\mathbf U$-invariants by 
Lemma~\ref{lem:V'otimesV}. Then Lemma~\ref{lem:(VW)*} and
Lemma~\ref{lem:doubledual}
imply that 
$(V^{\otimes r}\otimes {V^*}^{\otimes s})'\otimes
(V^{\otimes r}\otimes {V^*}^{\otimes s})$ is isomorphic as a $\mathbf
U$-module to 
$ V^{\otimes s}\otimes {V'}^{\otimes r}\otimes
V^{\otimes r}\otimes  {V^*}^{\otimes s}=
V^{\otimes s}\otimes {V'}^{\otimes r-1}\otimes V'\otimes V\otimes
V^{\otimes r-1}\otimes  {V^*}^{\otimes s}
$. Applying  $\psi'$ to the middle part, this is isomorphic as a
$\mathbf U$-module to 
$ V^{\otimes s}\otimes {V'}^{\otimes r-1}
\otimes V\otimes V^*\otimes
V^{\otimes r-1}\otimes  {V^*}^{\otimes s}$. By a similar argument as
above, we have $R$-isomorphisms
\[ 
V^{\otimes s}\otimes {V'}^{\otimes r-1}
\otimes V\otimes V^*\otimes
V^{\otimes r-1}\otimes  {V^*}^{\otimes s}
\cong 
\mathrm{End}_R(V^*\otimes 
V^{\otimes r-1}\otimes {V^*}^{\otimes s})
\]
and again $\mathbf U$-invariants map to $\mathbf U$-endomorphisms. 
Successive application of $\psi$ shows that 
$V^*\otimes 
V^{\otimes r-1}\otimes {V^*}^{\otimes s}$ and $V^{\otimes r-1}\otimes
{V^*}^{\otimes s+1}$ are isomorphic $\mathbf U$-modules (and thus
isomorphic as $R$-modules), so we have
\begin{eqnarray*}
\mathrm{End}_R(V^*\otimes 
V^{\otimes r-1}\otimes {V^*}^{\otimes s})&\cong&
\mathrm{End}_R( V^{\otimes r-1}\otimes {V^*}^{\otimes s+1})\\
\mathrm{End}_{\mathbf U}(V^*\otimes 
V^{\otimes r-1}\otimes {V^*}^{\otimes s})&\cong&
\mathrm{End}_{\mathbf U}( V^{\otimes r-1}\otimes {V^*}^{\otimes s+1}).
\end{eqnarray*}
The second part follows by  iterated application of the first part. 
\end{proof}

So far, we didn't use the particular form of the isomorphisms in
Lemma~\ref{lem:V'otimesV} and
Proposition~\ref{prop:isomorphisms}. 
Let $T$ be an oriented tangle   of type $(\downarrow^r\uparrow^s)$. 
Then the isomorphism in  Proposition~\ref{prop:isomorphisms} maps
$\psi_T$  (see Definition \ref{psiT})  to an element of
$\mathrm{End}_R( V^{\otimes r-1}\otimes {V^*}^{\otimes s+1})$.
Our next aim is to show that there is a tangle
$S$ of type $(\downarrow^{r-1}\uparrow^{s+1})$ such that $\psi_T$ maps
to $\psi_S$.

\begin{lem}
  \label{lemma:cross}
    Let  $\psi$ be  as in Lemma~\ref{lem:VV*V'V}.
    Let $\widehat{S}=\raisebox{-.1cm}{\epsfbox{si.18}}$,
    then 
    \[
    \psi_{\widehat{S}}=\mathrm{id}^{\otimes k-1}\otimes 
    \psi\otimes \mathrm{id}^{\otimes m-k-1}
    \]
  
\end{lem}
\begin{proof}
 One just has to check that the action of $\psi_{\widehat{S}}$ is exactly
 the same as the action of $\psi$.
\end{proof}

Suppose that  $T$ is an oriented tangle of type $(I,J)$ such that
$I_1=J_1=\downarrow$. As shown in Figure~\ref{figure:psi}, the oriented tangle
$T$ represented by the rectangle can be
embedded into a piece of knot diagram (the vertical strands having
suitable orientation). The resulting oriented tangle is an oriented tangle of type
$((\uparrow,I_2,\ldots,I_m),(\uparrow,J_2,\ldots,J_m))$ and will be
denoted by $\raisebox{-.2cm}{\epsfbox{crossings.11}}T$.  
\begin{figure}[h]
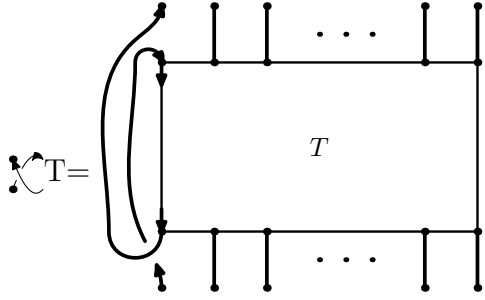

  \centerline{\raisebox{-.2cm}{\epsfbox{crossings.11}}T=
  \raisebox{-1.5cm}{\epsfbox{psi.1}}}
\caption{$T$ embedded into a piece of knot diagram}
\label{figure:psi}
\end{figure}

\begin{lem}\label{lem:flip}
Let $T$ be an oriented tangle of type $(I,J)$ with $I_1=J_1=\downarrow$.
 If we identify $\psi_T$ with
an element of an $2m$-fold tensor space via the map
$\mathrm{Hom}_R(V_I,V_J)\cong V_{I}'\otimes V_J$, we get with
$S=\raisebox{-.2cm}{\epsfbox{crossings.11}}T$: 
\[
\mathrm{id}^{\otimes
  m-1}\otimes\psi'\otimes\mathrm{id}^{\otimes m-1}(\psi_T)=
\psi_{S}.
\]
The same holds if $T$ (and in the same way $S$) is replaced by a linear
combination of such tangles. 
\end{lem}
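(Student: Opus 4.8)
Both sides of the claimed identity are $R$-linear in $T$, so by Corollary~\ref{cor:basis} it suffices to treat a tangle $T$ without self-crossings or closed cycles in which two strands cross at most once; we may moreover take $T$ to be descending with respect to a total ordering on its starting vertices that we fix later. We also note that $T\mapsto\raisebox{-.2cm}{\epsfbox{crossings.11}}T$ is a well-defined $R$-linear map $\mathcal{U}_{I,J}\to\mathcal{U}_{(\uparrow,I_2,\dots,I_m),(\uparrow,J_2,\dots,J_m)}$, because bending the first strand is a local modification that commutes with regular isotopy and with the relations $(O1)$--$(O4)$ applied away from it; the case $m=1$ is immediate and serves as a sanity check.

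The plan is to turn the statement into an identity of matrix entries and then verify that identity with Procedure~\ref{procedure}. First I would spell out both identifications. Via $\mathrm{Hom}_R(V_I,V_J)\cong V_I'\otimes V_J$ (Section~\ref{section:finitedual}), Lemma~\ref{lem:(VW)*}, and $V_{I_1}'=V'$ (since $I_1=\downarrow$), one has
\[
\psi_T=\sum_{\mathbf i,\mathbf j\in I(n,m)}\psi_{\mathbf i,\mathbf j}(T)\;\xi_{i_m}\otimes\cdots\otimes\xi_{i_2}\otimes(v_{i_1}'\otimes v_{j_1})\otimes\upsilon_{j_2}\otimes\cdots\otimes\upsilon_{j_m},
\]
where $\xi_{i_k}\in V_{I_k}'$ and $\upsilon_{j_k}\in V_{J_k}$ are the relevant basis vectors. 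Likewise, using $V_{I_1'}'=(V^*)'\cong V$ (Lemma~\ref{lem:doubledual}) and $V_{J_1'}=V^*$,
\[
\psi_S=\sum_{\mathbf k,\mathbf l}\psi_{\mathbf k,\mathbf l}(S)\;\xi_{k_m}\otimes\cdots\otimes\xi_{k_2}\otimes v_{k_1}\otimes v_{l_1}^*\otimes\upsilon_{l_2}\otimes\cdots\otimes\upsilon_{l_m}.
\]
Applying $\mathrm{id}^{\otimes m-1}\otimes\psi'\otimes\mathrm{id}^{\otimes m-1}$ to the first expansion, substituting the formula for $\psi'$ from Lemma~\ref{lem:VV*V'V}, and comparing the coefficient of $\xi_{i_m}\otimes\cdots\otimes\xi_{i_2}\otimes v_a\otimes v_b^*\otimes\upsilon_{j_2}\otimes\cdots\otimes\upsilon_{j_m}$ with that of $\psi_S$, the lemma becomes equivalent to the two scalar identities (with $\tilde{\mathbf i}=(i_2,\dots,i_m)$, $\tilde{\mathbf j}=(j_2,\dots,j_m)$)
\begin{align*}
\psi_{(a,\tilde{\mathbf i}),(b,\tilde{\mathbf j})}(S)&=q^{\,n+1-2b}\,\psi_{(b,\tilde{\mathbf i}),(a,\tilde{\mathbf j})}(T)\qquad(a\neq b),\\
\psi_{(a,\tilde{\mathbf i}),(a,\tilde{\mathbf j})}(S)&=q^{\,n-2a}\,\psi_{(a,\tilde{\mathbf i}),(a,\tilde{\mathbf j})}(T)+(q^{-1}-q)\sum_{c>a}q^{\,n+1-2c}\,\psi_{(c,\tilde{\mathbf i}),(c,\tilde{\mathbf j})}(T).
\end{align*}

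I would then prove these by following the first strand of $T$, which either propagates (joining $t_1$ to a bottom vertex) or is horizontal (joining $t_1$ to another top vertex); in both cases $S=\raisebox{-.2cm}{\epsfbox{crossings.11}}T$ arises simply by turning that strand around at $t_1$ and at $b_1$, and one can choose descending representatives so that the run of Procedure~\ref{procedure} on $S$ differs from the one on $T$ only in the contributions of this strand and of the local caps/curls created when straightening the bend. Running the procedure on a descending representative of $S$ compatible with the relevant double index and re-expressing the outcome in terms of the values $\psi_{\cdot,\cdot}(T)$ — which forces the turned strand to be slid past the strands whose first-slot index is $\le$ that of $t_1$, each such move producing via $(O1)$ the factors $q^{-1}$ and $q^{-1}-q$ exactly as in the proof of Lemma~\ref{lem:well-defined} — reproduces the two right-hand sides above; the bend itself accounts for the prefactor $q^{n+1-2i}$ (with $i$ the index at $t_1$) through the horizontal-strand rule of Procedure~\ref{procedure} together with relations $(O3)$--$(O4)$. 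This is the $\psi'$-analogue of the computation behind Lemma~\ref{lem:Hecke}\,\ref{item:cross} for $\psi$, the discrepancy being precisely that $\psi'$ is obtained from $\psi$ by precomposing with the natural $\mathbf U$-module isomorphism $V'\to V^*$, $v_i'\mapsto q^{n+1-2i}v_i^*$. The step I expect to require the most care, and the only genuine obstacle, is this bookkeeping: keeping exact track of which horizontal strands from left to right appear or disappear in the top and bottom rows, of the curls created by straightening the bend, and of which crossing signs change, so that the scalars produced match $q^{n+1-2i}$, $q^{-1}$ and $q^{-1}-q$ with the correct signs in the exponents. Everything else — the module identifications and the extraction of the two scalar identities — is routine once the conventions are fixed.
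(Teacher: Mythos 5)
Your reduction of the lemma to the two scalar identities is correct and is exactly the paper's first step: your displayed equations for $\psi_{(a,\tilde{\mathbf i}),(b,\tilde{\mathbf j})}(S)$ coincide with the paper's equation~\eqref{equation:psi'} under $a=i_1$, $b=j_1$. The gap is in the second half: the verification of these identities is the entire content of the lemma, and you defer it as ``bookkeeping'' of which horizontal strands appear or disappear, which curls arise, and which $(O1)$-factors are produced when the bent strand is slid past the others. That is precisely the part that cannot be waved at, because it is where all the exponents $q^{n+1-2j_1}$, $q^{n-2i_1}$ and the range $c>a$ of the correction sum have to come out exactly right, and your sketch gives no mechanism for organizing the case analysis (propagating versus horizontal first strand, $i_1=j_1$ versus $i_1\neq j_1$, and the dependence on the orientation $J_m$ of the last strand, which genuinely enters the intermediate coefficients).

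The paper avoids exactly this difficulty by factoring the bend: it writes $S=\circlearrowright\bigl(\circlearrowleft(T)/\hat{S}_1^{\swarrow\!\!\!\!\!\!\nwarrow}\bigr)$, where $\circlearrowleft$ and $\circlearrowright$ rotate the boundary vertices. The effect of rotation on the coefficients is a clean local statement (only the strands $s(t_1)$ and $s(b_m)$ can switch between horizontal and vertical, giving the factors $q^{-2i_1+n+1}$, $q^{2(j_m-i_1)}$, $q^{n+1-2j_m}$ according to whether $J_m$ is $\uparrow$ or $\downarrow$), and the sliding of the bent strand past its neighbour is not redone by hand but is absorbed into Theorem~\ref{thm:linking} applied to the single basic tangle $\hat{S}_1^{\swarrow\!\!\!\!\!\!\nwarrow}$, whose matrix entries are already known from Lemma~\ref{lem:mupltiplicative_m=2} and Lemma~\ref{lem:Hecke}. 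If you want to keep your direct route, you must at minimum carry out the two cases of \eqref{equation:psi'} explicitly, including the $J_m=\downarrow$ subcase where the strand through $b_m$ changes status under the bend; as written, the proposal asserts the conclusion of that computation rather than performing it.
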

\begin{proof}
  For  $\mathbf i\in I(n,m)$, let $\bar{\mathbf i}=
  (i_2,\ldots,i_m)\in I(n,m-1)$, 
  $\bar{\bar{\mathbf i}}=(i_3,\ldots,i_m)\in
  I(n,m-2)$ and let $\widehat{\mathbf i}=(i_m,\ldots,i_2)$ be the multi
  index $\bar{\mathbf i}$ in reversed order. Let $\bar
  I=(I_2,\ldots,I_m)$, $\widehat{I}$, etc.~ be defined similarly.
  Then $\psi_T$
  corresponds to 
  $\sum_{\mathbf i,\mathbf j}\psi_{\mathbf i,\mathbf j}(T)v_{\widehat{\mathbf
      i}}^{\widehat{I}}\otimes v_{i_1}'\otimes v_{j_1}\otimes
  v_{\bar{\mathbf j}}^{\bar J}$ under
  the isomorphism $\mathrm{Hom}_R(V_I,V_J)\cong V_{I}'\otimes
  V_J$. Under $\mathrm{id}^{\otimes
    m-1}\otimes\psi'\otimes\mathrm{id}^{\otimes m-1}$, 
  this $2m$-fold tensor maps to 
  \begin{align*}
    &\sum_{\mathbf i,\mathbf j,i_1\neq j_1}\psi_{\mathbf i,\mathbf j}(T)
    q^{n+1-2i_1}v_{\widehat{\mathbf
        i}}^{\widehat{I}}\otimes v_{j_1}\otimes v_{i_1}^*\otimes v_{\bar{\mathbf
      j}}^{\bar J}\\
   & + \sum_{\mathbf i,\mathbf j,i_1= j_1}
    \psi_{\mathbf i,\mathbf j}(T)
    q^{n+1-2i_1}v_{\widehat{\mathbf
        i}}^{\hat I}\otimes \left(q^{-1}v_{i_1}\otimes v_{i_1}^*
    +(q^{-1}-q)\sum_{l=1}^{i_1-1}v_l\otimes v_l^*\right)\otimes 
  v_{\bar{\mathbf j}}^{\bar{J}}.
  \end{align*}
  Comparing coefficients, we see that this corresponds to $\psi_S$ if
  and only if 
  \begin{equation}\label{equation:psi'}
  \psi_{\mathbf i,\mathbf j}(S)
  =\left\{
    \begin{array}{ll}
      q^{n+1-2j_1}\psi_{(j_1,\bar{\mathbf i}),(i_1,\bar{\mathbf j})}(T)&
      \mbox{ if }i_1\neq j_1\bigskip\\
      q^{n-2i_1} \psi_{\mathbf i,\mathbf j}(T)\\
      +(q^{-1}-q)\sum\limits_{k=i_1+1}^n
      q^{n+1-2k} \psi_{(k,\bar{\mathbf i}),(k,\bar{\mathbf j})}(T)
      & \mbox{ if }i_1= j_1.
    \end{array}
  \right. 
  \end{equation}
This is an easy computation for $m=1$, since there is only
one descending oriented tangle. So assume
that $m\geq 2$. Note that we can assume that $T$ is descending with
respect to $(\mathbf i,\mathbf j)$.
We will show Equation~\eqref{equation:psi'}
using Theorem~\ref{thm:linking}.
Let
\begin{eqnarray*}
  (I^\circlearrowleft,J^\circlearrowleft)&=&
  ((I_2,\ldots,I_m,\bar{J_m}),(\bar{I_1},J_1,\ldots,J_{m-1})),\\ 
  (I^\circlearrowright,J^\circlearrowright)&=&
  ((\bar{J_1},I_1,\ldots,I_{m-1}),(J_2,\ldots,J_m,\bar{I_m}))
\end{eqnarray*}
where $\bar{I_k}=\downarrow$ if and only if $I_k=\uparrow$. 
Let
$(\mathbf i^\circlearrowleft,\mathbf j^\circlearrowleft)$ and 
$(\mathbf i^\circlearrowright,\mathbf j^\circlearrowright)$
be defined similarly (without the $\bar{}$ ).
We can define an isomorphism of
$R$-modules
$\circlearrowleft:\mathcal{U}_{I,J}\rightarrow 
\mathcal{U}_{I^\circlearrowleft,J^\circlearrowleft}$
by rotating the vertices 
anticlockwise, similarly $\circlearrowright:\mathcal{U}_{I,J}\rightarrow 
\mathcal{U}_{I^\circlearrowright,J^\circlearrowright}$ 
rotates the vertices clockwise (see Figure~\ref{figure:rotation_6}).
Let 
$\widehat{S}_1^{\swarrow\!\!\!\!\!\!\nwarrow}=
\raisebox{-.1cm}{\epsfbox{si.10}}
=\raisebox{-.1cm}{\epsfbox{si.11}}
+(q^{-1}-q)
\raisebox{-.1cm}{\epsfbox{si.12}}=S_1^{\swarrow\!\!\!\!\!\!\nwarrow}
+(q^{-1}-q)E_1^{\leftleftarrows}$. 
Let $T'=\circlearrowleft(T)$, and
$T''=T'|\widehat{S}_1^{\swarrow\!\!\!\!\!\!\nwarrow}$, then we 
have 
$\circlearrowright(T'')
=\raisebox{-.2cm}{\epsfbox{crossings.11}}T=:S$.
\begin{figure}[h]
\centerline{\epsfbox{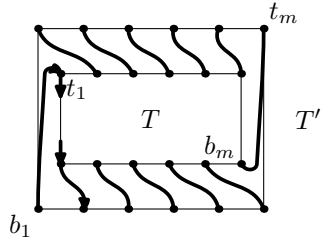}}
\caption{a rotated oriented tangle}\label{figure:rotation_6}
\end{figure}
The coefficients $\psi_{\mathbf i^\circlearrowleft,\mathbf
  j^\circlearrowleft}(T')$
are closely related to
the coefficients $\psi_{\mathbf i,\mathbf j}(T)$: 
$T'$ is
descending with
respect to $(\mathbf i^\circlearrowleft,\mathbf j^\circlearrowleft)$
if and only if 
$T$ is descending with
respect to 
$(\mathbf i,\mathbf j)$.
The crossings in both oriented tangles yield the same factors,  so we only 
have to care about the  horizontal
strands from left to right.
Taking these strands into account, we get

\begin{eqnarray*}
\psi_{\mathbf i^\circlearrowleft,\mathbf j^\circlearrowleft}(T')&=&q^{-2i_1+n+1} 
\psi_{\mathbf i,\mathbf j}(T)\mbox{ if }J_m=\uparrow.\\
\psi_{\mathbf i^\circlearrowleft,\mathbf j^\circlearrowleft}(T')&=&
q^{2(j_m-i_1)}
\psi_{\mathbf i,\mathbf j}(T)\mbox{ if }J_m=\downarrow.
\end{eqnarray*}
Similarly, one can show that
\begin{eqnarray*}
  \psi_{\mathbf i,\mathbf j}(S)&=&
  \psi_{\mathbf i^\circlearrowleft,\mathbf j^\circlearrowleft}(T'')
  \mbox{ if }J_m=\uparrow\\
  \psi_{\mathbf i,\mathbf j}(S)&=&q^{n+1-2j_m}
  \psi_{\mathbf i^\circlearrowleft,\mathbf j^\circlearrowleft}(T'')
  \mbox{ if }J_m=\downarrow.
\end{eqnarray*}
Suppose first, that $i_1=j_1$. Then $\psi_{\mathbf k,\mathbf
  j^\circlearrowleft}(\widehat{S}_1^{\swarrow\!\!\!\!\!\!\nwarrow})=q^{-1}$ if $\mathbf k
=\mathbf j^\circlearrowleft$, 
$\psi_{\mathbf k,\mathbf
  j^\circlearrowleft}(\widehat{S}_1^{\swarrow\!\!\!\!\!\!\nwarrow})=q^{-1}-q$
if $\mathbf k=(k,k,\bar{\bar{\mathbf j^\circlearrowleft}})$ and 
$\psi_{\mathbf k,\mathbf
  j^\circlearrowleft}(\widehat{S}_1^{\swarrow\!\!\!\!\!\!\nwarrow})=0$
otherwise. If $J_m=\uparrow$, 
then we have
\begin{eqnarray*}
  \psi_{\mathbf i,\mathbf j}(S)&=&
  \psi_{\mathbf i^\circlearrowleft,\mathbf j^\circlearrowleft}(T'')
  =\sum_{\mathbf k}\psi_{\mathbf i^\circlearrowleft,\mathbf k}(T')
  \psi_{\mathbf k,\mathbf
    j^\circlearrowleft}(\widehat{S}_1^{\swarrow\!\!\!\!\!\!\nwarrow})\\ 
  &=& q^{-1}\psi_{\mathbf i^\circlearrowleft,\mathbf
    j^\circlearrowleft}(T')
  +(q^{-1}-q)\sum_{k>i_1}\psi_{\mathbf i^\circlearrowleft,
    (k,k,\bar{\bar{\mathbf j^\circlearrowleft}})}(T')\\
  &=& q^{n-2i_1}\psi_{\mathbf i,\mathbf j}(T)
  +(q^{-1}-q)\sum_{k>i_1}q^{-2k+n+1}\psi_{(k,\bar{\mathbf i}),
    (k,\bar{\mathbf j})}(T).
\end{eqnarray*}.

All other cases can be shown similarly.
\end{proof}

\begin{cor}\label{cor:tangleisom}
  Let $T\in\mathfrak{B}_{r,s}^n(q)$ be an oriented tangle. Then
  the isomorphism $\mathrm{End}_R(V^{\otimes r}
  \otimes {V^*}^{\otimes  s})\to
  \mathrm{End}_R( V^{\otimes r-1}\otimes {V^*}^{\otimes s+1})$ maps
  $\psi_T$ to $\psi_S$ where $S$ is the oriented tangle 
  \[S=\raisebox{-2cm}{\epsfbox{isom.1}}\]
\end{cor}
\begin{proof}
  We can split the isomorphism into two isomorphisms 
  \begin{align*}
    &\mathrm{End}_R(V^{\otimes r}\otimes {V^*}^{\otimes s})
    \to \mathrm{End}_R(V^*\otimes 
    V^{\otimes r-1}\otimes {V^*}^{\otimes s})\text{
      and }\\ 
    &\mathrm{End}_R(V^*\otimes 
    V^{\otimes r-1}\otimes {V^*}^{\otimes s})
    \to
    \mathrm{End}_R
    (V^{\otimes r-1}\otimes {V^*}^{\otimes s+1}).
  \end{align*} 
  Lemma~\ref{lem:flip} shows that $\psi_T$ is mapped to 
  $\psi_{\raisebox{-.5mm}{\epsfxsize= .2cm \epsfbox{crossings.11}}T}\in
  \mathrm{End}_R(V^*\otimes V^{\otimes r-1}\otimes {V^*}^{\otimes
    s})$ under the first isomorphism.
  
  Let $\psi_k=\mathrm{id}^{\otimes k-1}\otimes 
  \psi\otimes \mathrm{id}^{\otimes m-k-1}$
  and let $T_k$ be the tangle from Lemma~\ref{lemma:cross}
  such that $\psi_k=\psi_{T_k}$.
  Hence the second isomorphism maps 
  $\psi_{\raisebox{-.5mm}{\epsfxsize= .2cm \epsfbox{crossings.11}}T}$
  to $\psi_S$ where
  \[S=T_{r-1}^{-1}|\ldots |T_2^{-1}|T_1^{-1}|
  \raisebox{-2mm}{\epsfbox{crossings.11}}T |T_1|T_2\ldots| T_{r-1}.\]
\end{proof}
Thus $S$ is obtained by embedding $T$ into a piece of knot diagram. 
Note that $T$ can be obtained back
from $S$ in a similar way. 

\section{Schur--Weyl duality}
We are finally in a position to prove the main result, which shows
that the $\mathbf U$-invariants of the endomorphism algebra of mixed
tensor space are generated by the action of $\mathfrak{B}_{r,s}^n(q)$.

\begin{thm}[first part of Schur--Weyl
  duality for mixed tensor space]\label{thm:schurweyl1}
  Let
  $\sigma_{r,s}:\mathfrak{B}_{r,s}^n(q)\to 
  \mathrm{End}_R(V^{\otimes r}\otimes 
  {V^*}^{\otimes s})$ be the representation of the 
  quantized walled
  Brauer algebra, then
  \[
  \mathrm{End}_{{\mathbf U}}(V^{\otimes r}\otimes 
  {V^*}^{\otimes s})=\sigma_{r,s}(
  \mathfrak{B}_{r,s}^n(q)).
  \]
\end{thm}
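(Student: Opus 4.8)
The plan is to follow the route of the $q=1$ theorem: transport Schur--Weyl duality for the ordinary tensor space (Theorem~\ref{thm:schurweyl}) across the $R$-module isomorphism $\Phi\colon\mathrm{End}_{\mathbf U}(V^{\otimes m})\to\mathrm{End}_{\mathbf U}(V^{\otimes r}\otimes{V^*}^{\otimes s})$ of Proposition~\ref{prop:isomorphisms}, using the oriented-tangle calculus of Sections~\ref{section:tangles}--\ref{section:tangleaction} to see what $\Phi$ does to the relevant algebras. First I would record two identifications. By Theorem~\ref{thm:schurweyl}, the isomorphism $\mathcal U_{(\downarrow^m),(\downarrow^m)}\cong\mathcal H_m$ of Section~\ref{section:tangles}, and Lemma~\ref{lem:Hecke}~\ref{item:Hecke}, the tangle action gives $\mathrm{End}_{\mathbf U}(V^{\otimes m})=\psi\bigl(\mathcal U_{(\downarrow^m),(\downarrow^m)}\bigr)$; and from Definition~\ref{defn:qwalledbrauer} together with the definition of the tangle action, $\sigma_{r,s}\bigl(\mathfrak B^n_{r,s}(q)\bigr)=\psi\bigl(\mathcal U_{(\downarrow^r\uparrow^s),(\downarrow^r\uparrow^s)}\bigr)$. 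So it suffices to show that $\Phi$ carries $\psi\bigl(\mathcal U_{(\downarrow^m),(\downarrow^m)}\bigr)$ onto $\psi\bigl(\mathcal U_{(\downarrow^r\uparrow^s),(\downarrow^r\uparrow^s)}\bigr)$.

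Next I would unravel $\Phi$. Iterating the step of Proposition~\ref{prop:isomorphisms} $s$ times from $(m,0)$, $\Phi$ is a composite of $s$ insertions of $\psi'$ (each acting in one tensor slot of a $\mathrm{Hom}$-space written as $V_I'\otimes V_J$) and finitely many insertions of $\psi$ (each in two adjacent slots), interleaved with the canonical rebracketings of Lemmas~\ref{lem:doubledual} and~\ref{lem:(VW)*}. Each ingredient corresponds to a tangle operation: the rebracketings are the vertex rotations $\circlearrowleft,\circlearrowright$ from the proof of Lemma~\ref{lem:flip}, which are $R$-module isomorphisms between the relevant modules $\mathcal U_{I,J}$; by Lemma~\ref{lem:Hecke}~\ref{item:cross} an insertion of $\psi$ is concatenation with a tangle $S_k^{\swarrow\!\!\!\!\!\!\nwarrow}+(q^{-1}-q)E_k^{\leftleftarrows}$; by Lemma~\ref{lem:flip} an insertion of $\psi'$ (after suitable rebracketing) is the ``cap--cup'' move of that lemma; and by Theorem~\ref{thm:linking} tangle concatenation acts by composition of the associated maps. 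Assembling these gives $\Phi\circ\psi=\psi\circ\Theta$, where $\Theta\colon\mathcal U_{(\downarrow^m),(\downarrow^m)}\to\mathcal U_{(\downarrow^r\uparrow^s),(\downarrow^r\uparrow^s)}$ is the explicit $R$-linear operation that bends the last $s$ endpoints on the top edge and the last $s$ on the bottom edge of a tangle around to the opposite side --- the oriented analogue of the vertex interchange in Figure~\ref{figure:flip}.

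The geometric ``unbending'' operation is a two-sided inverse of $\Theta$ up to the invertible scalars $q^{\pm n}$ produced by relations~$(O3)$,$(O4)$, so, both $\mathcal U_{(\downarrow^m),(\downarrow^m)}$ and $\mathcal U_{(\downarrow^r\uparrow^s),(\downarrow^r\uparrow^s)}$ being $R$-free of rank $m!$ by Lemma~\ref{lem:basis}, $\Theta$ is an isomorphism of $R$-modules. Therefore $\sigma_{r,s}\bigl(\mathfrak B^n_{r,s}(q)\bigr)=\psi\bigl(\mathcal U_{(\downarrow^r\uparrow^s),(\downarrow^r\uparrow^s)}\bigr)=\psi\bigl(\Theta(\mathcal U_{(\downarrow^m),(\downarrow^m)})\bigr)=\Phi\bigl(\psi(\mathcal U_{(\downarrow^m),(\downarrow^m)})\bigr)=\Phi\bigl(\mathrm{End}_{\mathbf U}(V^{\otimes m})\bigr)=\mathrm{End}_{\mathbf U}(V^{\otimes r}\otimes{V^*}^{\otimes s})$, which is the theorem. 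In particular this recovers the inclusion $\sigma_{r,s}(\mathfrak B^n_{r,s}(q))\subseteq\mathrm{End}_{\mathbf U}(V^{\otimes r}\otimes{V^*}^{\otimes s})$; independently, by Theorem~\ref{thm:linking} that inclusion could be checked on the generators $g_i,g^*_j,D$ of $\mathfrak B^n_{r,s}(q)$ using Lemma~\ref{lem:Hecke}.

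The hard part will be the identity $\Phi\circ\psi=\psi\circ\Theta$. Proving it amounts to threading a single Hecke-algebra tangle through the whole chain of isomorphisms of Section~\ref{section:isomorphism} while keeping track of strand orientations and every accumulating $q$-power: the factors $q^{n+1-2i}$ built into $\psi'$, the crossing weights $q^{\pm1}$, and the $q^{\pm n}$ from the kinks created and then cancelled by relations~$(O3)$,$(O4)$. Lemmas~\ref{lem:Hecke}~\ref{item:cross} and~\ref{lem:flip} were set up precisely to isolate the $\psi$- and $\psi'$-contributions, so what remains is to fit them together consistently; once $\Phi\circ\psi=\psi\circ\Theta$ is established, the rest of the argument is purely formal.
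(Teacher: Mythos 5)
Your proposal is correct and follows essentially the same route as the paper: the paper also transports ordinary Schur--Weyl duality across the chain of isomorphisms of Proposition~\ref{prop:isomorphisms}, using Lemma~\ref{lem:flip} for the $\psi'$-step, Lemma~\ref{lem:Hecke}~\ref{item:cross} for the $\psi$-steps, and Theorem~\ref{thm:linking} to identify the whole transport with the tangle-bending operation of Figure~\ref{figure:isom}. The only organizational difference is that the paper runs this as an induction on $s$ (one $(r,s)\to(r-1,s+1)$ step at a time, with invertibility of the bending seen directly from the figure) rather than assembling the full composite $\Theta$ at once.
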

\begin{proof}
  We fix a nonnegative integer $m$ and show the result for $r$ and $s$
  with $r+s=m$ 
  by induction on $s$.  For $s=0$ the claim
  follows from
  Proposition~\ref{prop:Hecke} and 
  Theorem~\ref{thm:schurweyl}.
  Assume that the theorem holds for $s<m$ and thus $r\geq 1$.  
  Recall the  $R$-linear isomorphism (see
  Proposition~\ref{prop:isomorphisms}): 
  \[    \mathrm{End}_{{\mathbf U}}(V^{\otimes r}\otimes {V^*}^{\otimes s})
  \cong 
  \mathrm{End}_{{\mathbf U}}( V^{\otimes r-1}\otimes {V^*}^{\otimes s+1})
  \]
  Since the theorem holds for $s$ we have
  $\psi_T\in\mathrm{End}_{{\mathbf U}} 
  (V^{\otimes r}\otimes {V^*}^{\otimes s})$ for any oriented tangle of type
  $((\downarrow^r\uparrow^s),(\downarrow^r\uparrow^s))$ and any element
  of $\mathrm{End}_{{\mathbf U}} 
  (V^{\otimes r}\otimes {V^*}^{\otimes s})$ is a linear combination of
  such $\psi_T$'s.
  By Corollary~\ref{cor:tangleisom} the image of $\psi_T$ under the
  isomorphism is in $\sigma_{r-1,s+1}(
  \mathfrak{B}_{r-1,s+1}^n(q))$. Furthermore each $\psi_S\in \sigma_{r-1,s+1}(
  \mathfrak{B}_{r-1,s+1}^n(q))$ can be obtained as the image of some
  $\psi_T$. This shows that the result holds for $(r-1,s+1)$.
\end{proof}

We also have the following.

\begin{cor}\label{cor:schurweyl1}
  \begin{enumerate}
  \item
    Let $I$ and $J$ be $m-tuples$ with entries in 
    $\{\downarrow, \uparrow\}$, such that the numbers of entries equal
    to $\uparrow$ coincide for $I$ and $J$. Then
    \[\mathrm{Hom}_{{\mathbf U}}(V_I,V_J)=\sigma(\mathcal{U}_{I,J})\] with
    $\sigma:\mathcal{U}_{I,J}\to
    \mathrm{Hom}_R(V_I,V_J):T\mapsto \psi_T$. 
  \item We have an isomorphism
    \[\mathrm{ann}_{\mathcal{H}_{r+s}}(V^{\otimes{r+s}})
    \cong
    \mathrm{ann}_{\mathfrak{B}_{r,s}^n(q)}
    (V^{\otimes{r}}\otimes
    {V^*}^{\otimes s})
    \]
    as $R$-modules.  In particular, the action of
    $\mathfrak{B}_{r,s}^n(q)$ is faithful if and only if
    $\mathrm{dim}(V)\geq r+s$. Furthermore, for any $r,s$
    the annihilator $ \mathrm{ann}_{\mathfrak{B}_{r,s}^n(q)}
    (V^{\otimes{r}}\otimes {V^*}^{\otimes s})$ is free over $R$ of
    rank not depending on $R$.
  \end{enumerate}
\end{cor}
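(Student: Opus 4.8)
The plan is to deduce both assertions from Theorem~\ref{thm:schurweyl1} by transporting it along isomorphisms that are themselves realized by tangles, using Theorem~\ref{thm:linking} (compatibility of the tangle action with concatenation) as the bookkeeping tool. No new geometry is needed: everything is built out of the $\mathbf U$-module isomorphisms already at hand, namely $\psi$ and $\psi'$ of Lemma~\ref{lem:VV*V'V} and the Hecke action.

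\emph{Part (1).} Write $r$ (resp. $s$) for the common number of $\downarrow$'s (resp. $\uparrow$'s) in $I$ and $J$. Since $\psi\colon V^*\otimes V\to V\otimes V^*$ and its inverse are $\mathbf U$-module isomorphisms, there is a $\mathbf U$-module isomorphism $\Theta\colon V^{\otimes r}\otimes{V^*}^{\otimes s}\xrightarrow{\ \sim\ }V_I$ obtained as a composite of maps of the form $\mathrm{id}^{\otimes a}\otimes\psi^{\pm1}\otimes\mathrm{id}^{\otimes b}$ (bubble the $\uparrow$'s into the positions prescribed by $I$). By Lemma~\ref{lem:Hecke}\ref{item:cross} each such factor equals $\psi$ of a single element of the appropriate $\mathcal U$ (either $\hat S_k^{\swarrow\!\!\!\!\!\!\nwarrow}$ or $S_k^{\nearrow\!\!\!\!\!\!\searrow}$), and by the identities $\hat S_k^{\swarrow\!\!\!\!\!\!\nwarrow}/S_k^{\nearrow\!\!\!\!\!\!\searrow}=\mathds 1=S_k^{\nearrow\!\!\!\!\!\!\searrow}/\hat S_k^{\swarrow\!\!\!\!\!\!\nwarrow}$ from the proof of Theorem~\ref{thm:schurweyl1} that element has a two-sided inverse in the tangle algebra. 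Hence, by Theorem~\ref{thm:linking}, $\Theta=\psi_S$ for some $S\in\mathcal U_{(\downarrow^r\uparrow^s),I}$ admitting a two-sided inverse, and likewise there is $T\in\mathcal U_{J,(\downarrow^r\uparrow^s)}$ with a two-sided inverse and $\psi_T\colon V_J\xrightarrow{\sim}V^{\otimes r}\otimes{V^*}^{\otimes s}$ a $\mathbf U$-isomorphism. Then $X\mapsto S/X/T$ is an $R$-module isomorphism $\mathcal U_{I,J}\to\mathfrak B_{r,s}^n(q)$, and $\psi_{S/X/T}=\psi_T\circ\psi_X\circ\psi_S$ by Theorem~\ref{thm:linking}; so $\phi\mapsto\psi_T\circ\phi\circ\psi_S$ is an $R$-module isomorphism $\mathrm{Hom}_R(V_I,V_J)\xrightarrow{\sim}\mathrm{End}_R(V^{\otimes r}\otimes{V^*}^{\otimes s})$ which restricts to a bijection $\mathrm{Hom}_{\mathbf U}(V_I,V_J)\xrightarrow{\sim}\mathrm{End}_{\mathbf U}(V^{\otimes r}\otimes{V^*}^{\otimes s})$ (because $\psi_S,\psi_T$ are $\mathbf U$-linear) and carries $\sigma(\mathcal U_{I,J})$ onto $\sigma_{r,s}(\mathfrak B_{r,s}^n(q))$. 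By Theorem~\ref{thm:schurweyl1} the latter equals $\mathrm{End}_{\mathbf U}(V^{\otimes r}\otimes{V^*}^{\otimes s})$, and since the conjugation map is injective this forces $\sigma(\mathcal U_{I,J})=\mathrm{Hom}_{\mathbf U}(V_I,V_J)$.

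\emph{Part (2).} The key point is not just that $\mathcal H_{r+s}$ and $\mathfrak B_{r,s}^n(q)$ are both $R$-free of rank $(r+s)!$ (Lemma~\ref{lem:basis}), but that the proof of Theorem~\ref{thm:schurweyl1} already produces a specific $R$-module isomorphism between them that intertwines the two representations. Indeed, each induction step of that proof (see Figure~\ref{figure:isom}, via Lemma~\ref{lem:flip} and Lemma~\ref{lem:Hecke}\ref{item:cross}) gives, for $r\ge1$, an $R$-module isomorphism $\mathcal U_{(\downarrow^r\uparrow^s),(\downarrow^r\uparrow^s)}\to\mathcal U_{(\downarrow^{r-1}\uparrow^{s+1}),(\downarrow^{r-1}\uparrow^{s+1})}$ (embed a tangle into the pictured piece of knot diagram; the inverse is obtained the same way, as Figure~\ref{figure:isom} shows) which sends $\psi_X$ to the image of $\psi_X$ under the corresponding $R$-module isomorphism of Proposition~\ref{prop:isomorphisms}. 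Composing these isomorphisms, and using $\mathcal H_{r+s}\cong\mathcal U_{(\downarrow^{r+s}),(\downarrow^{r+s})}$ with $\sigma$ equal to the tangle action (Lemma~\ref{lem:Hecke}\ref{item:Hecke}) together with $\mathfrak B_{r,s}^n(q)=\mathcal U_{(\downarrow^r\uparrow^s),(\downarrow^r\uparrow^s)}$, I obtain an $R$-module isomorphism $\Phi\colon\mathcal H_{r+s}\to\mathfrak B_{r,s}^n(q)$ with $\sigma_{r,s}\circ\Phi=\kappa\circ\sigma$, where $\kappa\colon\mathrm{End}_{\mathbf U}(V^{\otimes r+s})\to\mathrm{End}_{\mathbf U}(V^{\otimes r}\otimes{V^*}^{\otimes s})$ is the (injective) isomorphism of Proposition~\ref{prop:isomorphisms}. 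As $\kappa$ is injective, $\Phi$ carries $\ker\sigma=\mathrm{ann}_{\mathcal H_{r+s}}(V^{\otimes r+s})$ isomorphically onto $\ker\sigma_{r,s}=\mathrm{ann}_{\mathfrak B_{r,s}^n(q)}(V^{\otimes r}\otimes{V^*}^{\otimes s})$, which is the asserted $R$-module isomorphism. In particular the two annihilators vanish simultaneously, and since $\mathcal H_{r+s}$ acts faithfully on $(R^n)^{\otimes(r+s)}$ exactly when $n\ge r+s$ (well known, compare the $q=1$ case in Section~\ref{section:q=1}), the action of $\mathfrak B_{r,s}^n(q)$ on $V^{\otimes r}\otimes{V^*}^{\otimes s}$ is faithful if and only if $\dim V=n\ge r+s$.

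The step I expect to require the most care is making sure the auxiliary tangle maps really are $\mathbf U$-linear rather than merely $R$-linear bijections: in part (1), that the isomorphism $V^{\otimes r}\otimes{V^*}^{\otimes s}\cong V_I$ can be assembled so that it is built entirely from the $V/V^*$-mixing crossings $\hat S_k^{\swarrow\!\!\!\!\!\!\nwarrow}$, $S_k^{\nearrow\!\!\!\!\!\!\searrow}$ (whose action is the $\mathbf U$-isomorphism $\psi$ of Lemma~\ref{lem:VV*V'V}), never through a same-type crossing; and in part (2), confirming that the knot-diagram embedding used in the proof of Theorem~\ref{thm:schurweyl1} is genuinely an $R$-module bijection of the tangle algebras at each step and exactly implements the relevant isomorphism of Proposition~\ref{prop:isomorphisms}. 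Once these are in place, everything else is routine tracking of Theorem~\ref{thm:linking}.
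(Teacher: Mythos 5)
Your proposal is correct and follows essentially the same route as the paper: part (1) by realizing the $\mathbf U$-isomorphisms $V_{(\downarrow^r\uparrow^s)}\cong V_I$, $V_J\cong V_{(\downarrow^r\uparrow^s)}$ as products of the $\psi_k^{\pm1}$ and conjugating by the corresponding invertible tangles, and part (2) by extracting from the proof of Theorem~\ref{thm:schurweyl1} the $R$-linear isomorphism $\mathcal H_{r+s}\to\mathfrak B_{r,s}^n(q)$ intertwining $\sigma$ and $\sigma_{r,s}$ through the isomorphism of Proposition~\ref{prop:isomorphisms}, so that kernels correspond. Your write-up only makes explicit the bookkeeping (two-sided invertibility of the conjugating tangles, the intertwining identity $\sigma_{r,s}\circ\Phi=\kappa\circ\sigma$) that the paper leaves implicit.
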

\begin{proof}
1. $V_I$ and $V_J$ are isomorphic to $V^{\otimes r}\otimes
  {V^*}^{\otimes s}$, the isomorphisms are products of isomorphism
  $\psi_k$ and $\psi_k^{-1}$ as in the proof of Corollary~\ref{cor:tangleisom}.
  Multiplying the corresponding oriented tangles
  from the left resp.~right maps  
  $\mathfrak{B}_{r,s}^n(q)$ to $\mathcal{U}_{I,J}$. 

2.  The proof of Theorem~\ref{thm:schurweyl1}
  shows that there is an $R$-linear isomorphism from 
  $\mathcal{H}_{r+s}\to\mathfrak{B}_{r,s}^n(q)$ such that 
  $T\in\mathcal{H}_{r+s}\mapsto S\in \mathfrak{B}_{r,s}^n(q)$
  implies $\psi_T\mapsto \psi_S$ under the isomorphism 
  $\mathrm{End}_{{\mathbf U}}
  (V^{\otimes r+s})\to
  \mathrm{End}_{{\mathbf U}}
  (V^{\otimes r}\otimes {V^*}^{\otimes s})$. 
  But then $\psi_T=0$ if and only if
  $\psi_S=0$. The rest of the corollary follows from \cite{haerterich}.
\end{proof}

\begin{rem}
  The isomorphism $\mathrm{End}_{{\mathbf U}}(V^{\otimes r+s})\cong
  \mathrm{End}_{{\mathbf U}}(V^{\otimes r}\otimes {V^*}^{\otimes s})$
  can be modified such that the isomorphism maps $\psi_T$ to $\psi_S$
  with $S$ given as in Figure~\ref{figure:modified2}.
  \begin{figure}[h]
    \centerline{
      S=  \raisebox{-2cm}{\epsfbox{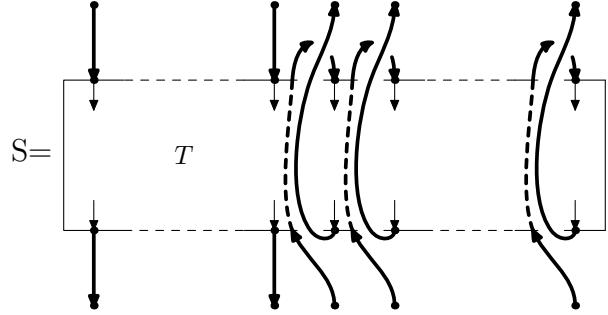}}}
    \caption{$\mathrm{End}_{{\mathbf U}}(V^{\otimes r+s})\cong
  \mathrm{End}_{{\mathbf U}}(V^{\otimes r}\otimes
  {V^*}^{\otimes s})$}\label{figure:modified2}
  \end{figure}
  In the classical case $q=1$, this isomorphism turns a permutation
  diagram (one having only vertical edges) into a walled Brauer
  diagram by rotating the right side of the Brauer diagram by
  $180^\circ$ around a horizontal axis.
\end{rem}


\end{document}